\documentclass[letterpaper, 10 pt, conference]{ieeeconf}

\IEEEoverridecommandlockouts
\usepackage{amsmath,amssymb,amsfonts,mathtools,bm}
\usepackage[ruled,vlined]{algorithm2e}
\usepackage{tikz}
\usetikzlibrary{arrows.meta, positioning, calc}

\DeclareMathOperator*{\argmin}{arg\,min}
\newcommand{\Prob}{\mathbb P}
\newcommand{\E}{\mathbb E}

\newcommand{\F}{\mathcal F}

\newcommand{\R}{\mathbb R}
\newcommand{\Ss}{\mathbb S}
\newcommand{\diag}{\mathrm{diag}}

\newcommand{\transpose}{^\mathsf{T}}

\usepackage{amsthm}

\newtheorem{proposition}{Proposition}
\newtheorem{remark}{Remark}
\newtheorem{assumption}{Assumption}

\usepackage{color}
\usepackage{xcolor}
\usepackage{cite}
\usepackage[colorlinks=true, allcolors=blue]{hyperref}

\title{\LARGE \bf
Information Revelation and Alignment Faking in Stochastic Differential Games}

\author{Daniel Ralston$^{1}$, Xu Yang$^{1}$ and Ruimeng Hu$^{1,2}$
\thanks{This work was partially supported by the ONR grant \#N00014-24-1-2432, the Simons Foundation (MP-TSM-00002783) and the NSF grant DMS-2420988.}
\thanks{$^{1}$Department of Mathematics, University of California, Santa Barbara, CA 93106, USA.
Email: \texttt{\{danielralston, xy6, rhu\}@ucsb.edu}.}
\thanks{$^{2}$Department of Statistics and Applied Probability, University of California, Santa Barbara, CA 93106, USA.}
}

\begin{document}

\maketitle
\thispagestyle{empty}
\pagestyle{empty}

\begin{abstract}
In competitive games with private objectives, actions can reveal information about hidden parameters. However, quantifying such information revelation is nontrivial, since it depends not only on the opponent's hidden parameter but also on the opponent's model of the game. We study this problem via a two-player linear-quadratic stochastic differential game under partial information, in which each player knows its own coupling parameter and models the opponent’s hidden parameter through a prior. Starting from the full-information game, we characterize the Nash equilibrium by coupled Riccati equations. We then define baseline implementable controls by averaging the equilibrium under each player’s prior.  Building on this baseline, we formulate an alignment faking control problem in which one player trades off fidelity to its implementable policy against information acquisition about the opponent’s hidden parameter. The information incentive is constructed from a proxy Fisher information matrix based only on the player’s available model. This leads to a tractable saddle-point formulation with semi-explicit control characterization through Riccati systems. Numerical illustrations show that alignment faking can substantially improve information gain over baseline play when the faker’s model is accurate, but often at the cost of greater detectability. They also show that the proxy Fisher information can systematically differ from the true information under model misspecification.
\end{abstract}


\section{Introduction}
Information can be revealed by actions in competitive games with private objectives. By observing an opponent's behavior, a player may infer features of its hidden objective. However, quantifying such information revelation is nontrivial, since it depends not only on the opponent's private objective but also on the opponent's model of the game, a challenge described by Townsend in \cite{townsend1983forecasting} as ``forecasting the forecasts of others''. In games with private information, a player must therefore reason through a proxy for the true information, constructed from its own available quantities. 


This raises a further strategic question: Can a player deliberately modify its behavior to reveal more information about an opponent's hidden objective, while still appearing close to ordinary play? This paper addresses that question through what we call \emph{alignment faking (AF) control}. In our setting, AF refers to a control strategy that remains close to the player's baseline implementable control while intentionally steering the trajectory to extract more information about the opponent's hidden parameter.

The term ``alignment faking'' is borrowed from the AI safety literature, where it refers to systems that strategically conceal their true objectives from overseers \cite{greenblatt2024alignment,hubinger2019risks}. Our model gives a continuous-time  game-theoretic analogue: the player using the AF control plays the role of the AI system, while the opponent plays the role of the overseer. Unlike much of the AI safety literature, where the objective of AF is often left implicit, our framework makes the underlying tradeoff explicit through a control objective which balances information gain and closeness to baseline behavior.

To study this tradeoff, we consider a two-player linear-quadratic stochastic differential game; see Figure~\ref{fig:schematic} for a schematic illustration. Within this setting, we develop an analytically tractable quantitative framework that addresses how private beliefs shape information revelation, how purposeful deviations from baseline behavior affect information gain, and how such deviations may be detected. The main contributions of the paper are summarized below.



\begin{figure}[t h]
    \centering
    \resizebox{\columnwidth}{!}{
        \begin{tikzpicture}[
            >=Stealth,
            playerbox/.style={draw, rounded corners=3pt, minimum width=3.4cm, minimum height=1.6cm, align=center, font=\normalsize},
            controlbox/.style={draw, rounded corners=2pt, minimum width=1.5cm, minimum height=0.75cm, align=center, font=\normalsize},
            trajbox/.style={draw, thick, rounded corners=3pt, minimum width=5cm, minimum height=0.7cm, align=center, font=\normalsize},
            questionbox/.style={draw, thick, rounded corners=5pt, minimum width=7cm, align=left, font=\normalsize, inner sep=6pt},
            bluearrow/.style={->, thick, blue!70!black},
            redarrow/.style={->, thick, red!70!black},
        ]
         
        \node[playerbox, fill=blue!8] (A) {
            \textbf{Player $A$ (AF)}\\[2pt]
            Public state: $X_t^A$, $t \in [0,T]$\\
            Hidden private parameter: $m_A$\\
            Goal: track $m_A X_t^B$\\
            Internal model: prior $\pi^B$ for $m_B$ 
        };
         
        \node[playerbox, fill=red!8, right=1.cm of A] (B) {
            \textbf{Player $B$ (Opponent)}\\[2pt]
            Public state: $X_t^B$, $t\in [0,T]$\\
            Hidden private parameter: $m_B$\\
            Goal: track $m_B X_t^A$ \\
            Internal model: prior $\pi^A$ for $m_A$ 
        };
         
        \node[controlbox, fill=blue!8, below left=0.7cm and -2.cm of A] (AF) {AF control\\$v^\ast$};
        \node[controlbox, fill=blue!8, below right=0.6cm and -2.cm of A] (baseA) {Baseline\\ implementable control \\ $\overline{u}^{A,\ast}$};
         
        \node[controlbox, fill=red!10, below=0.6cm of B] (baseB) {Baseline\\ implementable control \\$\widetilde{u}^{B,\ast}$};
         
        \draw[bluearrow] (A.south) -- ++(0,-0.15) -| (AF.north);
        \draw[bluearrow] (A.south) -- ++(0,-0.15) -| (baseA.north);
        \draw[redarrow] (B.south) -- (baseB.north);
         
        \node[trajbox, below=2.5cm of $(A.south)!0.5!(B.south)$] (traj) {
            Observed trajectories \;$X_t^A,\; X_t^B$
        };
         
        \draw[bluearrow] (AF.south) -- ++(0,-0.4) -| ([xshift=-0.6cm]traj.north);
        \draw[bluearrow] (baseA.south) -- ++(0,-0.1) -| ([xshift=0.0cm]traj.north);
        \draw[redarrow] (baseB.south) -- ++(0,-0.2) -| ([xshift=0.6cm]traj.north);
         
        \node[font=\small, text=black] at ($(AF.east)!0.5!(baseA.west)$) {or};
         
        \node[questionbox, below=0.5cm of traj] (Q) {
            \begin{minipage}{6.6cm}
            \begin{enumerate}\setlength{\itemsep}{1pt}\setlength{\parskip}{0pt}
                \item[\textbf{Q1.}] How does player $A$'s information about $m_B$ change depending on $\pi^A$ and $\pi^B$?
                \item[\textbf{Q2.}] Can the AF control yield trajectories that reveal more information about $m_B$?
                \item[\textbf{Q3.}] Can player $B$ detect the use of alignment faking $v^\ast$ from observed trajectories?
            \end{enumerate}
            \end{minipage}
        };
         
        \draw[->, thick, black] (traj.south) -- (Q.north);
         
        \end{tikzpicture}
    }
    \caption{Schematic of the alignment faking (AF) game. Each player knows its own coupling parameter and holds a prior on the opponent's. Player $A$ may deviate from the baseline implementable control via AF, while player $B$ plays the baseline.
    }
    \label{fig:schematic}
\end{figure}
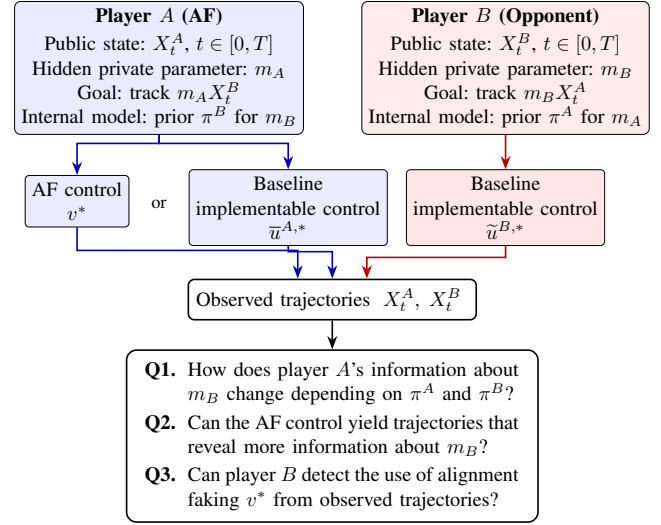


\noindent \textbf{1. Symmetric partial-information game with alignment faking:} We formulate a two-player continuous-time stochastic differential game under partial information, extending earlier Stackelberg-type models \cite{hu2025strategic} to a symmetric setting. We derive implementable controls as expectations of the full-information Nash equilibrium under each player's prior on the opponent's hidden parameter, which provides a tractable and consistent surrogate for equilibrium play in the partial-information game.  We then introduce an additional information-seeking control objective defined through a proxy Fisher information matrix constructed from the player's available quantities. We obtain semi-explicit characterizations of the implementable and AF controls, and establish their well-posedness through coupled Riccati systems under explicit time horizon bounds (Propositions~\ref{prop:theta_bound}--\ref{prop:exist_unique_thetaAF}).

\noindent\textbf{2. Numerical validation and discovery:} Numerical experiments show that the AF control can yield substantial information gains over baseline play when the alignment faker's model is well specified, but at the cost of greater detectability. We also show that the proxy Fisher information can systematically misestimate the true Fisher information under model misspecification. Finally, we find that information quality depends mainly on the alignment faker's own model, while the opponent's model has a secondary but noticeable effect.


\noindent {\it Related Literature}. Dynamic games have long been used to model strategic interactions among competing agents \cite{bacsar1998dynamic, carmona2016lectures, zhou2025adversarial,zhou2025integrating,kim2026stealthy,kim2024defining}. Games with incomplete information, where parameters in the dynamics or cost functionals are not known to all players, are especially relevant in adversarial settings arising in areas such as economics \cite{morris2002social}, cybersecurity \cite{zhang2020game}, and, more recently, AI systems \cite{hubinger2019risks}. Previous works have studied Nash equilibria in linear-quadratic games with asymmetric information using the stochastic maximum principle \cite{chang2014linear} and with continuous type spaces via viscosity solutions of Hamilton--Jacobi equations \cite{cardaliaguet2012games}. 
In incomplete-information games where the players share a common prior, a Bayesian Nash equilibrium \cite{harsanyi1967games} can be found by augmenting the state with the players' posterior beliefs over the unknown parameters and solving for a Nash equilibrium on this belief-augmented space.
Because our game features hidden parameters without common priors, and each player's prior is itself unknown to the opponent, we define implementable controls by averaging the full-information Nash equilibrium under each player's prior, and then study how a player can strategically deviate from this baseline to improve information extraction. 

Our work is also related to inverse learning and inverse optimal control, where the goal is to infer cost functions or unknown parameters from observed behavior \cite{molloy2017inverse,peters2021inferring,hu2025strategic,ward2025active}. In the linear-quadratic setting, this inverse problem is closely tied to the coupled Riccati equations governing optimal controls, whose existence and solvability have been widely studied \cite{abou2012matrix,papavassilopoulos1979existence}. The key difference is that we do not study passive inference alone. Instead, we introduce an active control problem in which one player deliberately modifies its behavior to increase the information revealed by the trajectory.

\noindent {\it Notations.}
Boldface letters denote vectors (e.g. $\bm{X}_t$, $\bm{W}_t$, $\bm{z}$), and normal typeface denotes scalar quantities. 
Random variables modeling unknown parameters are capitalized (e.g. $M_A$), while corresponding known or realized values are lowercase (e.g. $m_A$).
Super/subscripts $A$ (resp. $B$) index values associated with player $A$ (resp. player $B$), and the superscript $AF$ indexes values associated with player $A$'s alignment faking problem.  
The operator $\diag\{\cdot\}$ constructs a (block) diagonal matrix from its arguments.  We use $(\cdot)\transpose$ for transpose, $\|\cdot\|_F$ for the Frobenius norm, and $\mathbb{S}^{n\times n}$ for the space of $n\times n$ real symmetric matrices.  An overline $\overline{(\cdot)}$ and a tilde $\widetilde{(\cdot)}$ denote quantities computed using player $A$'s and player $B$'s available information (prior), while $\overline{\widetilde{(\cdot)}}$ denotes a player $A$ proxy for a player $B$ quantity. 

\section{Problem Setup and Baseline Nash Equilibrium}\label{sec:setup}

This section formulates a stochastic differential game between two competing players, \(A\) and \(B\). Assuming all model parameters are known to both players, we characterize the Nash equilibrium through a coupled Riccati system and provide conditions for its global existence. This setting serves as a baseline for the partial-information case.

\subsection{State Dynamics and Cost Functionals}
Fix $T>0$ and let $(\Omega,\F, \{\F_t\}_{t\geq 0}, \Prob)$ be a filtered probability space supporting a two-dimensional Brownian motion $\bm{W}_t = [ W_t^A, W_t^B] \transpose$ with independent components.
We model players $A$ and $B$ by the $\R^2$-valued state process  $\bm{X}_t = [X_t^A, X_t^B]\transpose$, controlled by $\bm{u}_t = [ u_t^A, u_t^B]\transpose$ and subject to noise from $\bm{W}_t$, evolving according to
\begin{equation}\label{eq:state_dynamics}
   dX_t^i = u_t^i \, dt + \sigma_i \, dW_t^i, \quad  i \in \{A, B\}, \quad \sigma_i > 0. 
\end{equation}

Let $m_A, m_B \in \R$ be fixed parameters. Player $i$ minimizes the cost functional 
\begin{equation}
    \label{eq:cost_A}
    J^i[\bm u] := \E\!\Big[\int_0^T f^i(t,\bm{X}_t,\bm{u}_t;m_i)\, dt\Big], \quad i \in \{A,B\},
\end{equation}
where the running cost of player $i$ is quadratic, defined as 
\begin{equation*}
    f^i(t,\bm{x},\bm{u};m_i) := q_i(x^i - m_i x^{-i})^2 + r_i (u^i)^2,
\end{equation*}
with $q_i, r_i > 0$, and $-i$ refers to the player who is not $i$.  
The running cost is interpreted as a tracking objective: player $i$ steers its state toward a fixed  multiple $m_i$ of the opponent's state.

For the baseline full-information game, we assume both players know both $m_A$ and $m_B$, the dynamics \eqref{eq:state_dynamics}, and cost functionals \eqref{eq:cost_A}.  We restrict the controls to be in the admissible sets, $i \in \{A,B\}$,
\begin{equation*}
    \mathcal{U}^{i} := \Big\{u^{i} \, \Big| \, \E\int_0^T |u_s^i|^2  ds < \infty, \bm{u}_t \in \sigma(\bm{X}_t)\Big\}.
\end{equation*}
The notion of interest is a Nash equilibrium (NE), a pair $(u^{A, \ast}, u^{B, \ast}) \in \mathcal{U}^A \times \mathcal{U}^B$ such that, for all $u^A \in \mathcal{U}^A$, $u^B \in \mathcal{U}^B$,
\begin{align*}
    J^{A}[u^{A, \ast}, u^{B,\ast}] &\leq J^{A}[u^{A}, u^{B,\ast}], \\
    J^{B}[u^{A, \ast}, u^{B,\ast}] &\leq J^{B}[u^{A,\ast}, u^{B}].
\end{align*}
In particular, we focus on Markovian NE of the form $u^{i, \ast}_t = u^{i, \ast}(t, \bm x)$, where $u^{i, \ast}: [0,T] \times \R^2 \to \R$ is measurable, \(i\in\{A,B\}\).



\subsection{Baseline Nash Equilibrium}\label{subsec:baseline_NE}

\noindent For $i \in \{A,B\}$, define the  best-response value function
\begin{equation*}
    V^i(t,\bm{x}) := \inf\limits_{u^i\in\mathcal{U}^i} \E\Big[\int_t^T f^i(s,\bm{X}_s,\bm{u}_s;m_i)\,ds \mid \bm X_t = \bm x\Big].
\end{equation*}
By dynamic programming \cite[Section~5.1.4]{carmona2016lectures}, $V^A$ and $V^B$ satisfy a coupled Hamilton-Jacobi-Bellman (HJB) system 
\begin{align*}
    0 = \partial_t V^A + \inf_{u^A}\big\{u^A \partial_{x^A} V^A + f^A\big\} + u^{B,\ast} \partial_{x^B} V^A 
    \\ + \tfrac{\sigma_A^2}{2}\partial^2_{x^A x^A}V^A + \tfrac{\sigma_B^2}{2}\partial_{x^B x^B}^2 V^A,\\
    0 = \partial_t V^B + \inf_{u^B}\big\{u^B \partial_{x^B} V^B + f^B\big\} + u^{A,\ast} \partial_{x^A} V^B \\ + \tfrac{\sigma_A^2}{2}\partial^2_{x^A x^A}V^B + \tfrac{\sigma_B^2}{2}\partial_{x^B x^B}^2 V^B,
\end{align*}
with terminal conditions $V^i(T, \bm{x}) = 0$, where $u^{i,\ast} = \argmin\limits_{u^i\in \R}\{u^i \partial_{x^i} V^i + f^i\}$.
Using the quadratic ansatz $V^i(t,\bm{x}) = \bm{x}\transpose \theta^i(t) \, \bm{x}  
+ \psi^i(t)$, the NE takes the form
\begin{align}
    u^{A,*}(t,\bm{x}; m_A, m_B) &= -\tfrac{1}{r_A}\!\left(x^A \theta_{11}^A(t) + x^B \theta_{12}^A(t) 
    \right), \label{eq:baseline_NE_control_A} \\
    u^{B,*}(t,\bm{x}; m_A, m_B) &= -\tfrac{1}{r_B}\!\left(x^A \theta_{12}^B(t) + x^B \theta_{22}^B(t) 
    \right). \label{eq:baseline_NE_control_B}
\end{align}
Here, for each pair $(m_A, m_B)$, the matrices 
 $\theta^A(t)$, $\theta^B(t) \in \Ss^{2\times 2}$  solve the coupled Riccati system 
\begin{align}\label{eq:Riccati_A}
    \dot{\theta}^A(t) &= \theta^A R_A \theta^A + \theta^A R_B \theta^B + \theta^B R_B \theta^A - Q_A, \\
    \label{eq:Riccati_B}
    \dot{\theta}^B(t) &= \theta^B R_B \theta^B + \theta^A R_A \theta^B + \theta^B R_A \theta^A - Q_B,
\end{align}
with terminal conditions $\theta^A(T) = \theta^B(T) = 0$, where \(\theta_{jk}^i(t)\) denotes the \((j,k)\)-entry of \(\theta^i(t)\), and 
\begin{align*}
    Q_A &= \begin{bmatrix} q_A & -m_A q_A \\ -m_A q_A & m_A^2 q_A \end{bmatrix}, &
    R_A &= \begin{bmatrix} \tfrac{1}{r_A} & 0 \\ 0 & 0 \end{bmatrix}, \\
    Q_B &= \begin{bmatrix} m_B^2 q_B & -m_B q_B \\ -m_B q_B & q_B \end{bmatrix}, &
    R_B &= \begin{bmatrix} 0 & 0 \\ 0 & \tfrac{1}{r_B} \end{bmatrix}.
\end{align*}
These equations follow by substituting the ansatz into the coupled HJB system and matching coefficients. Later, we sometimes  put $(m_A, m_B)$ after the semicolon and write $u^{i,*}(t,\bm{x}; m_A, m_B)$ and $\theta^i(t; m_A, m_B)$, to emphasize the dependence on both parameters. 


Next, we state a proposition that will be important for the existence of implementable controls, discussed in Section~\ref{sec:partial_info}.


\begin{proposition}\label{prop:theta_bound}
    Let $\theta = \diag\{\theta^A, \theta^B\}$, and suppose \vspace{-0.3em}    
    \begin{multline}\label{eq:T_bound}
        T \leq \mathcal{T}(m_A, m_B)
        := (c_1 c_2)^{-1/2}  \\ \cdot\arctan \big((1+m_A^2 + m_B^2)(c_2/c_1)^{-1/2}\big),
    \end{multline}
    where $c_1 = 5 (r_A^{-2} + r_B^{-2})^{1/2}$ and $c_2= \sqrt{q_A^2 + q_B^2}\,(1 + m_A^2 + m_B^2).$ 
    Then any solution to \eqref{eq:Riccati_A}--\eqref{eq:Riccati_B} on $[0,T]$ satisfies $\big\|\theta(t; m_A, m_B)\big\|_F \le 1 + m_A^2 + m_B^2$. 
\end{proposition}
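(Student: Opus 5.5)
Write the coupled system \eqref{eq:Riccati_A}--\eqref{eq:Riccati_B} in the block form
\[
\dot\theta = \mathcal{N}(\theta) - Q, \qquad Q := \diag\{Q_A, Q_B\},
\]
where $\mathcal{N}$ collects the six quadratic terms. Then run a backward comparison argument on $y(s) := \|\theta(T-s)\|_F$, with $s = T-t \in [0,T]$ and $y(0)=0$. The plan is to show that $y$ is dominated by the solution of a scalar Riccati equation
\[
\dot z = c_1 z^2 + c_2, \qquad z(0)=0,
\]
whose explicit solution is
\[
z(s) = \sqrt{c_2/c_1}\,\tan\big(\sqrt{c_1c_2}\,s\big).
\]
The time-horizon condition \eqref{eq:T_bound} is exactly
\[
\sqrt{c_2/c_1}\,\tan\big(\sqrt{c_1c_2}\,T\big) \le 1+m_A^2+m_B^2 .
\]
Since $\arctan$ takes values in $[0,\pi/2)$, $z$ stays finite on $[0,T]$, and we conclude $y(s)\le z(s)\le z(T)\le 1+m_A^2+m_B^2$.

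\textbf{Step 1 (bounding the constant term).} Compute
\[
\|Q\|_F = \big(\|Q_A\|_F^2+\|Q_B\|_F^2\big)^{1/2}, \qquad \|Q_A\|_F = q_A\|(1,-m_A)\|^2 = q_A(1+m_A^2),
\]
and similarly $\|Q_B\|_F = q_B(1+m_B^2)$. Hence
\[
\|Q\|_F \le \sqrt{q_A^2+q_B^2}\,(1+m_A^2+m_B^2) = c_2 .
\]

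\textbf{Step 2 (bounding the quadratic term).} Use submultiplicativity of the Frobenius norm, together with $\|R_A\|_F = r_A^{-1}$, $\|R_B\|_F = r_B^{-1}$, and $\|\theta^A\|_F,\|\theta^B\|_F\le\|\theta\|_F$. This gives
\[
\|\mathcal N(\theta)\|_F \le (2r_A^{-1}+3r_B^{-1})\|\theta\|_F^2 \quad\text{or similar}.
\]
I will then bound a sum like $a r_A^{-1}+b r_B^{-1}$ by $\sqrt{a^2+b^2}\,(r_A^{-2}+r_B^{-2})^{1/2}$ (Cauchy--Schwarz). A crude count gives at most $6\cdot\max$ combinations, and a careful split should give a constant of at most $5$, yielding $\|\mathcal N(\theta)\|_F \le c_1\|\theta\|_F^2$. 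If the naive count gives a constant slightly bigger than $5$, I would instead bound $\|\theta^A\|_F\|\theta^B\|_F \le \tfrac12\|\theta\|_F^2$ for the cross terms. This sharpens the count to something like
\[
r_A^{-1}\big(\|\theta^A\|^2+\|\theta^A\|\|\theta^B\|\big)+r_B^{-1}\big(\ldots\big) \le 5\,(r_A^{-2}+r_B^{-2})^{1/2}\|\theta\|_F^2 .
\]
Getting the exact constant $5$ is the fiddly step. It is the main obstacle only bookkeeping-wise; any constant suffices for the structure of the argument.

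\textbf{Step 3 (comparison).} Since $\|\cdot\|_F$ is not differentiable at $0$, work with $y^2$. We have
\[
\frac{d}{ds}\, y^2 = -2\langle\theta,\dot\theta\rangle_F \le 2y\big(c_1y^2+c_2\big),
\]
so $y$ satisfies $D^+y \le c_1y^2+c_2$ in the Dini-derivative sense wherever $y>0$; alternatively, use $\sqrt{y^2+\epsilon}$ and let $\epsilon\to0$. A standard comparison lemma for scalar ODEs with locally Lipschitz right-hand side then gives $y\le z$ on $[0,T]$, as long as $z$ is finite there, which is guaranteed by \eqref{eq:T_bound}. Monotonicity of $\tan$ on $[0,\pi/2)$ then delivers the stated bound for every $t\in[0,T]$, applied to any solution of the Riccati system on $[0,T]$.
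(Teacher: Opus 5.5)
Your proposal is correct and follows essentially the same route as the paper: write the system in block form $\dot\theta=F(\theta)-Q$, bound $\|Q\|_F\le c_2$ and $\|F(\theta)\|_F\le c_1\|\theta\|_F^2$, reverse time, and compare with $\dot u=c_1u^2+c_2$, $u(0)=0$, whose $\tan$ solution stays below $1+m_A^2+m_B^2$ exactly when $T\le\mathcal T(m_A,m_B)$. The constant $5$ is not actually fiddly: your six-term count is $3(r_A^{-1}+r_B^{-1})\le 3\sqrt2\,(r_A^{-2}+r_B^{-2})^{1/2}$, below $5(r_A^{-2}+r_B^{-2})^{1/2}$ (the paper gets $1+2+2$ because the block-swap matrix in its formula for $F$ has Frobenius norm $2$), and your Dini-derivative handling of $\|\cdot\|_F$ is, if anything, more careful than the paper's direct appeal to Hale's comparison corollary.
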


\begin{proof}
Write the coupled Riccati system in block form as
$\dot{\theta} = F(\theta) - Q$ where $Q := \diag\{Q_A, Q_B\}$, 
    \begin{multline*}
        F(\theta) := \theta \begin{bmatrix} R_A & 0 \\ 0 & R_B \end{bmatrix} \theta + \theta \begin{bmatrix} 0 & R_B \\ R_A & 0 \end{bmatrix}\theta \begin{bmatrix} 0 & I_2 \\ I_2 & 0 \end{bmatrix} \\
        + \Big(\theta \begin{bmatrix} 0 & R_B \\ R_A & 0 \end{bmatrix}\theta \begin{bmatrix} 0 & I_2 \\ I_2 & 0 \end{bmatrix}\Big)\transpose. 
    \end{multline*}
    By submultiplicativity of the Frobenius norm, one has $\|F(\theta)\|_F \le c_1 \|\theta\|_F^2, \; \|Q\|_F \le c_2$. 
    Now define the time-reversed variable \(Y(t):=\theta(T-t)\). Then $
\dot{Y}(t) = -F(Y(t))+Q$,
and hence
$
\|\dot{Y}(t)\|_F
\le \|F(Y(t))\|_F+\|Q\|_F
\le c_1\|Y(t)\|_F^2+c_2$.

Consider the scalar ODE $\dot{u} = c_1 u^2 + c_2$, $u(0) = 0$, with explicit solution $u(t) = \sqrt{c_2/c_1}\,\tan(\sqrt{c_1 c_2}\, t)$. It remains finite for $t < \pi/(2\sqrt{c_1 c_2})$.  Restricting $t \leq \mathcal{T}(m_A, m_B)$, we have $u(t) \leq (1 + m_A^2 + m_B^2)$. Therefore, by  \cite[Corollary~6.3]{hale2009ordinary}, $\|Y(T-t)\|_F \leq u(T-t)$, and the result follows.  
\end{proof}

\begin{proposition}\label{prop:exist_unique_baseline}
    Let $T \leq \mathcal{T}(m_A, m_B)$ as in \eqref{eq:T_bound}.  Then the coupled Riccati system \eqref{eq:Riccati_A}--\eqref{eq:Riccati_B} for $\theta^A$, $\theta^B$ 
    admits a unique solution over $[0,T]$.
\end{proposition}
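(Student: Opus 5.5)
We prove Proposition~\ref{prop:exist_unique_baseline} by the standard local-existence-plus-a-priori-bound argument for ODEs with locally Lipschitz right-hand side. We work throughout with the time-reversed variable $Y(t)=\theta(T-t)$ from the proof of Proposition~\ref{prop:theta_bound}. It satisfies the autonomous initial value problem
\begin{equation*}
\dot Y = G(Y) := -F(Y)+Q, \qquad Y(0)=0,
\end{equation*}
posed on the finite-dimensional space of block-diagonal matrices $\diag\{\theta^A,\theta^B\}$ with $\theta^A,\theta^B\in\Ss^{2\times 2}$. We first check that this space is invariant. The right-hand sides of \eqref{eq:Riccati_A}--\eqref{eq:Riccati_B} are symmetric whenever $\theta^A,\theta^B$ are symmetric: $\theta^A R_A\theta^A$ is symmetric, the pair $\theta^A R_B\theta^B+\theta^B R_B\theta^A$ is a sum of a matrix and its transpose, and $Q_A,Q_B$ are symmetric. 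Hence $G$ maps the space into itself.

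Next we establish local Lipschitz continuity and local existence. Since $G$ is a polynomial (quadratic) map in the entries of $Y$, it is $C^\infty$, hence locally Lipschitz. Concretely, using bilinearity and submultiplicativity as in Proposition~\ref{prop:theta_bound}, one gets $\|F(Y_1)-F(Y_2)\|_F\le c_1(\|Y_1\|_F+\|Y_2\|_F)\|Y_1-Y_2\|_F$. By Picard--Lindel\"of there is a unique maximal solution on $[0,t^\ast)$ for some $t^\ast\in(0,\infty]$. Moreover, if $t^\ast<\infty$, then $\|Y(t)\|_F\to\infty$ as $t\uparrow t^\ast$.

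The main step is to rule out blow-up before time $T$. Suppose $t^\ast\le T$. On every compact subinterval $[0,s]\subset[0,t^\ast)$, $Y$ is a solution on $[0,s]$ with $s\le T\le\mathcal{T}(m_A,m_B)$. The comparison argument of Proposition~\ref{prop:theta_bound} then applies verbatim: compare $\|Y(t)\|_F$ with the scalar solution $u(t)=\sqrt{c_2/c_1}\tan(\sqrt{c_1c_2}\,t)$. This gives $\|Y(t)\|_F\le u(t)\le 1+m_A^2+m_B^2$ for all $t<t^\ast$. The argument needs the bound on each subinterval rather than on all of $[0,T]$ at once, and the comparison lemma is stated locally, so this is legitimate. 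The resulting uniform bound contradicts the blow-up alternative, so $t^\ast>T$ and a solution exists on $[0,T]$.

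Uniqueness on $[0,T]$ follows from Gr\"onwall. Any two solutions $Y_1,Y_2$ both satisfy the bound $\|Y_i\|_F\le 1+m_A^2+m_B^2$ by Proposition~\ref{prop:theta_bound}. Therefore
\begin{equation*}
\|\dot Y_1-\dot Y_2\|_F\le 2c_1(1+m_A^2+m_B^2)\|Y_1-Y_2\|_F,
\end{equation*}
and since $Y_1(0)=Y_2(0)$, this forces $Y_1\equiv Y_2$. Reversing time, $\theta(t)=Y(T-t)$ is the unique solution of \eqref{eq:Riccati_A}--\eqref{eq:Riccati_B} with $\theta^A(T)=\theta^B(T)=0$. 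The only delicate point is the interplay between the a priori bound and maximality, that is, applying Proposition~\ref{prop:theta_bound} on subintervals of the maximal interval; everything else is routine.
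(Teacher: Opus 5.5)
Your proof is correct and follows the paper's argument. Both time-reverse to $Y(t)=\theta(T-t)$, take the unique maximal solution from Picard--Lindel\"of, and use the a priori bound of Proposition~\ref{prop:theta_bound} on subintervals to rule out blow-up before $T$. Your symmetric-invariance check and the Gr\"onwall uniqueness step are harmless refinements, since uniqueness on $[0,T]$ already follows from Picard--Lindel\"of once the maximal solution extends past $T$.
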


\begin{proof}
    Using the same definitions as in Proposition~\ref{prop:theta_bound}, let $Y(t) = \theta(T-t)$ so that $Y$ solves the initial value problem $\dot{Y} = -F(Y) + Q$, $Y(0) = 0$.  
    The right-hand side $G(Y) := -F(Y) + Q$ is quadratic in $Y$, and is therefore locally Lipschitz on any bounded subset of $\R^{4\times 4}$.
    By the Picard--Lindel\"of theorem, there exists a unique solution of $Y$ on $[0,T^\ast)$, where $T^\ast$ is maximal.
    Suppose that $T^* \leq T$; then $\|Y(t)\|_F \to \infty$ as $t \to T^*$. 
    However, on $[0, T^\ast)$ where the solution of $Y(t)$ exists, Proposition~\ref{prop:theta_bound} applies, yielding $\|Y(t)\|_F \leq 1 + m_A^2 + m_B^2$ for all $t \in [0,T^\ast - \epsilon]$, contradicting blowup at $T^*$. 
\end{proof}

\begin{remark}
    The bound \eqref{eq:T_bound} and the condition of Proposition~\ref{prop:exist_unique_thetaAF} below are explicit and checkable; they are sufficient but far from necessary, arising from a conservative ODE comparison argument.  When experiments use a time horizon beyond these bounds, we numerically verify well-posedness of $\theta^A$, $\theta^B$, and $\theta^{AF}$.
\end{remark}

 \section{Game under Partial Information}\label{sec:partial_info}

In the partial-information setting, the state history and the form of the state dynamics and cost functionals are fully known to both players, as are $q_i$, $r_i$, and $\sigma_i$.  However, player $A$ knows only $m_A$ and not the parameter $m_B$ in player $B$'s cost, while player $B$ knows only $m_B$ and not $m_A$.  The opponent's control processes and driving Brownian motions are not directly observed.

To model this information structure, let $M_A \in [m_A^-, m_A^+]$ and $M_B \in [m_B^-, m_B^+]$ be random variables describing the players' beliefs about $m_A$ and $m_B$. Player $A$ knows $M_A = m_A$ and assigns a prior $\pi^B$ to $M_B$, while player $B$ knows $M_B = m_B$ and assigns a prior $\pi^A$ to $M_A$. The priors represent the players' subjective beliefs about the opponent's hidden parameter and are treated as exogenous inputs to the model. In applications, such beliefs may be formed from prior observations, domain knowledge, or previous interactions; we do not assume that they are correctly specified. The bounded supports ensure that the ODE bounds hold uniformly over all admissible realizations of $M_A$ and $M_B$ (cf. Proposition~\ref{prop:theta_bound}).

Since the full-information NE controls \eqref{eq:baseline_NE_control_A}--\eqref{eq:baseline_NE_control_B} depend on both parameters, they are generally not implementable in the partial-information setting.  We therefore introduce implementable controls, which depend only on each player's available information, and derive their explicit forms. We define \emph{implementable controls} by taking expectations of the full-information NE controls under each player's prior:
\begin{align*}
    \overline{u}^{A,*}(t,\bm{x}; m_A) &:= \E^{\pi^B}\!\big[u^{A,*}(t,\bm{x}; m_A, M_B)\big],\\
    \widetilde{u}^{B,*}(t,\bm{x}; m_B) &:= \E^{\pi^A}\!\big[u^{B,*}(t,\bm{x}; M_A, m_B)\big].
\end{align*}
The resulting controls are therefore
\begin{align*}
\overline{u}^{A,\ast}(t, \bm{x}; m_A) &= -\tfrac{1}{r_A}\big(x^A \overline{\theta}_{11}^A(t;m_A) + x^B \overline{\theta}_{12}^A(t;m_A)\big), \\
    \widetilde{u}^{B,\ast}(t, \bm{x}; m_B) &= -\tfrac{1}{r_B}\big(x^A \widetilde{\theta}_{12}^B(t;m_B) + x^B \widetilde{\theta}_{22}^B(t;m_B)\big),
\end{align*}
where $\overline{\theta}^i$, $\widetilde{\theta}^i$, $i\in \{A,B\}$, are defined as
\begin{equation}\label{eq:implementa_thetai}
\begin{aligned}
\overline{\theta}^{A}(t; m_A) := \E^{\pi^B}\big[\theta^{A}(t; m_A, M_B)\big], \\
\widetilde{\theta}^{B}(t; m_B) := \E^{\pi^A}\big[\theta^{B}(t; M_A, m_B)\big],
\end{aligned}
\end{equation}
and \(\theta^A(t; m_A,M_B)\), \(\theta^B(t; M_A,m_B)\) are the matrix-valued solution of \eqref{eq:Riccati_A}--\eqref{eq:Riccati_B} corresponding to the parameter pair \((m_A,M_B)\) and \((M_A,m_B)\) evaluated at time \(t\).  Note that $\overline{\theta}^A$ and $\widetilde{\theta}^B$ are well defined by Proposition~\ref{prop:theta_bound} and the compactness of $[m_A^-, m_A^+]$ and $[m_B^-, m_B^+]$.

\begin{remark}\label{rem:interp_imp_cntrl} 
The construction of the prior-averaged implementable controls $\overline{u}^{A,\ast}$, $\widetilde{u}^{B,\ast}$ is a modeling choice, motivated by three considerations.  First, for each $(t,\bm{x})$, $\overline{u}^{A,\ast}$ is the $L^2(\pi^B)$-projection of the (non-implementable) full-information NE policy onto the space of controls that are measurable with respect to player $A$'s information.  Second, this form preserves semi-analytic tractability, since the control remains linear in $\bm{x}$ with $\overline{\theta}^A$ and $\widetilde{\theta}^B$ computable with  Monte Carlo and standard numerical methods.  Third, the construction is consistent.  Since $m_B \mapsto \theta^A(t;m_A,m_B)$ is continuous as well as
bounded by Proposition~\ref{prop:theta_bound}, weak convergence
$\pi^B \Rightarrow \delta_{m_B}$ yields
$\overline{u}^{A,\ast}\to u^{A,\ast}$; symmetrically,
$\widetilde{u}^{B,\ast}\to u^{B,\ast}$ as $\pi^A \Rightarrow \delta_{m_A}$. 

Other implementable constructions are possible, such as certainty-equivalent controls obtained by substituting a point estimate of the unknown parameter, or a fully belief-dependent strategy obtained by augmenting the state with posterior beliefs.
\end{remark}

Under the implementable controls, the state dynamics are
\begin{equation}
\label{eq:baseline_dynamics}    
\begin{aligned}
    dX_t^A &= \overline{u}^{A,\ast}(t, \bm X_t; m_A) dt + \sigma_A dW_t^A, \\
    dX_t^B &= \widetilde{u}^{B,\ast}(t, \bm X_t; m_B) dt + \sigma_B dW_t^B.  
\end{aligned}
\end{equation}
Equivalently, 
\begin{equation} \label{eq:strong_impl_dyn}  
    d\bm{X}_t = \begin{bmatrix}  -\frac{\overline{\theta}_{11}^A(t; m_A)}{r_A} & -\frac{\overline{\theta}_{12}^A(t; m_A)}{r_A} \\ -\frac{\widetilde{\theta}_{12}^B(t; m_B)}{r_B} & -\frac{\widetilde{\theta}_{22}^B(t; m_B)}{r_B} \end{bmatrix} \bm{X}_t \, dt + \Sigma \, d\bm{W}_t,
\end{equation}
with $\Sigma := \diag\{\sigma_A, \sigma_B\}$. Since the drift is linear in $\bm X$, with coefficients bounded on $[0,T]$, the system admits a unique strong solution.



\section{Alignment Faking  and Information Revelation}\label{sec:AF}
We now formulate the alignment faking (AF) problem for player $A$. This choice is without loss of generality, and the same construction could be instead carried out for player $B$. The goal is for player $A$ to choose a control that extracts information about player $B$'s hidden parameter $m_B$, while also remaining close to the baseline implementable control $\overline{u}^{A,*}$ so as to reduce the risk that player $B$ detects the deviation from the baseline play.  The AF problem for player $A$ is a single-player \emph{control} problem, since player $B$ does not deviate from its baseline implementable control $\widetilde{u}^{B,\ast}$.

\begin{assumption}[Truncated Gaussian Priors]\label{assu:truncated_gaussian_form}
    For the rest of the paper, we assume that both players model the opponent's unknown parameter by truncated Gaussians $
        \pi^A = \mathcal{N}(\mu_A, \rho_A^2; m_A^-, m_A^+)$, $\pi^B = \mathcal{N}\big(\mu_B, \rho_B^2; m_B^-, m_B^+\big)$, 
    where $\mathcal{N}(\mu, \rho^2; a, b)$ denotes a truncated Gaussian with density
    \begin{equation}\label{eq:truncated_Gaussian_density}
        f(m; \mu, \rho, a, b) = \frac{\phi\big(\tfrac{m-\mu}{\rho}\big)}{\rho\big(\Phi(\tfrac{b-\mu}{\rho}) - \Phi (\tfrac{a-\mu}{\rho})\big)}\mathbf{1}_{[a, b]}(m),
    \end{equation}
    and $\phi$ and $\Phi$ represent the density and cumulative distribution functions of a standard  Gaussian $\mathcal{N}(0,1)$.
\end{assumption}

Under Assumption~\ref{assu:truncated_gaussian_form}, player \(A\) knows \(m_A\) and the prior parameters \((\mu_B,\rho_B)\) of \(\pi^B\), but does not know player \(B\)'s realized parameter \(m_B\) nor player \(B\)'s prior parameters \((\mu_A,\rho_A)\) on $m_A$. We collect the quantities unknown to player $A$ relevant to player \(B\)'s control in $\gamma := (m_B,\mu_A,\rho_A)$.  
The choice of a truncated Gaussian to model the priors was made for three reasons: first, the score functions \eqref{eq:score_functions} admit closed forms; second, compact support is required for Proposition~\ref{prop:theta_bound}; third, it is a two-parameter family, providing interpretable axes for the misspecification studies in Section~\ref{sec:numerical_results}.

Let \(v_t\) denote player \(A\)'s AF control, and let player \(B\) use the implementable control \(\widetilde u^{B,*}\). The joint dynamics are
\begin{equation}\label{eq:dynamics_AF}
d\bm X_t
=
\bm b_\gamma\,dt
+
\Sigma\,d\bm W_t,
\quad
\bm b_\gamma
:=
\begin{bmatrix}
v_t\\
\widetilde u^{B,*}(t,\bm X_t;m_B)
\end{bmatrix}.
\end{equation}
The objective here balances closeness to the baseline control (i.e., minimizes $(v_t - \overline{u}^{A,*}_t)^2$) with information acquisition about $m_B$ (cf. \eqref{eq:Schur}). To ensure $v_t$ is implementable, we impose the following assumption.

\begin{assumption}\label{assu:AF}
The AF control \(v_t\) does not explicitly depend on the hidden quantities $\gamma = (m_B, \mu_A, \rho_A)$. Thus player \(A\) designs \(v_t\) using only its available information, namely the realized parameter \(m_A\) and the prior \(\pi^B\).
\end{assumption}

We  focus on implementable  Markovian AF controls for player \(A\), namely controls of the form $
v_t = v(t,\bm X_t; m_A,\pi^B)$, with
$\E\!\big[\int_0^T |v_t|^2\,dt\big] < \infty$.




\subsection{Likelihood Ratio and Fisher Information (FI)}

To quantify the information generated by player $A$'s control about $\gamma$, we derive its Fisher information matrix $I(\gamma)$. 

We begin with the log-likelihood of $\gamma$ based on the observed path $\{\bm{X}_t\}_{0 \leq t \leq T}$. Let \(\Prob_0\) be a reference measure under which \(\bm X\) is driftless. By \cite[Theorem 1.12]{kutoyants2013statistical}, 
 the log-likelihood is
\begin{equation*}
    \ell(\gamma) = \int_0^T \bm{b}_{\gamma}\transpose (\Sigma\Sigma\transpose)^{-1} d\bm{X}_t - \tfrac{1}{2}\int_0^T \bm{b}_{\gamma}\transpose (\Sigma\Sigma\transpose)^{-1} \bm{b}_{\gamma} \, dt.
\end{equation*}
Differentiating with respect to $\gamma_j$ and using \eqref{eq:dynamics_AF}, we obtain $\partial_{\gamma_j} \ell(\gamma) = \int_0^T (\partial_{\gamma_j} \bm{b}_{\gamma})\transpose (\Sigma\Sigma\transpose)^{-1} \Sigma \, d\bm{W}_t$.
Hence the Fisher information matrix $I(\gamma)$ has entries
\begin{align*}
    I(\gamma)_{jk} &= \E\!\Big[\int_0^T (\partial_{\gamma_j} \bm{b}_{\gamma})\transpose (\Sigma\Sigma\transpose)^{-1} (\partial_{\gamma_k} \bm{b}_{\gamma}) \, dt\Big] \notag \\
    &= \tfrac{1}{\sigma_B^2} \E\!\Big[\int_0^T \partial_{\gamma_j} \widetilde{u}^{B,*}_t \cdot \partial_{\gamma_k} \widetilde{u}^{B,*}_t \, dt\Big],
\end{align*}
where the second equality uses Assumption~\ref{assu:AF}, as $\partial_{\gamma_j} v = 0$.

The derivatives $\partial_{\gamma} \widetilde{u}^{B,*}$ are computed by differentiating the functions \(\widetilde\theta^B\) defined in \eqref{eq:implementa_thetai}. A direct calculation gives
\begin{align}
    \partial_{\mu_A} \widetilde{\theta}^B(t; m_B) &= \E^{\pi^A}[\theta^B(t;M_A, m_B) s_{\mu_A}(M_A)], \nonumber\\
    \partial_{\rho_A} \widetilde{\theta}^B(t; m_B) &= \E^{\pi^A}[\theta^B(t;M_A, m_B) s_{\rho_A}(M_A)], \label{eq:thetaBunderprior}\\
    \partial_{m_B} \widetilde{\theta}^B(t; m_B) &= \E^{\pi^A}[\partial_{m_B} \theta^B(t; M_A, m_B)], \nonumber
\end{align}
where $M_A$ has density \eqref{eq:truncated_Gaussian_density}, $\partial_{m_B}\theta^B$ is obtained by differentiating the Riccati system \eqref{eq:Riccati_A}--\eqref{eq:Riccati_B} with respect to $m_B$, yielding linear ODEs with terminal conditions $\partial_{m_B}\theta^i(T) = 0$, and
\begin{equation}\label{eq:score_functions}
\begin{aligned}
    s_{\mu_A}(m) &= \frac{m-\mu_A}{\rho_A^2} + \frac{\phi\big(\tfrac{m_A^+-\mu_A}{\rho_A}\big) - \phi\big(\frac{m_A^--\mu_A}{\rho_A}\big)}{\rho_A\big(\Phi\big(\tfrac{m_A^+-\mu_A}{\rho_A}\big) - \Phi\big(\tfrac{m_A^--\mu_A}{\rho_A}\big)\big)},\\
    s_{\rho_A}(m) &= \tfrac{(m-\mu_A)^2}{\rho_A^3} -\tfrac{1}{\rho_A} \\
    & \quad + \tfrac{\big(\tfrac{m_A^+ -\mu_A}{\rho_A}\big)\phi\big(\tfrac{m_A^+ -\mu_A}{\rho_A}\big) - \big(\tfrac{m_A^- -\mu_A}{\rho_A}\big)\phi\big(\tfrac{m_A^- -\mu_A}{\rho_A}\big)}{\rho_A\big(\Phi\big(\tfrac{m_A^+ -\mu_A}{\rho_A}\big) - \Phi\big(\tfrac{m_A^- -\mu_A}{\rho_A}\big)\big)}.
\end{aligned}
\end{equation}

\begin{remark}[FI and Information Revelation]\label{rem:AF_obj}Under regularity assumptions, if \(\widehat M_B^{(n)}\) is a maximum likelihood estimator (MLE) of $m_B$ based on $n$ independent repetitions of the game, 
    then \(\widehat M_B^{(n)}\) is consistent and asymptotically efficient:
        $\sqrt{n}\big(\widehat{M}_B^{(n)} - m_B \big) \to \mathcal{N}\big(0,[I(\gamma)^{-1}]_{m_B,m_B}\big)$, see  \cite[Theorem~2.8]{kutoyants2013statistical} or  \cite[Theorem 5.1]{lehmann1998theory}.      
    
\end{remark}

Although we do not explicitly model a repeated-game estimation procedure, we use the asymptotic variance \([I(\gamma)^{-1}]_{m_B,m_B}\) as a surrogate for the information-gathering component of player \(A\)'s AF objective. To characterize this quantity, we use the Schur complement and write
 \begin{multline}\label{eq:Schur}
    [I(\gamma)^{-1}]_{m_B,m_B} = \bigl(I_{m_B,m_B} \\ - I_{m_B,(\mu_A,\rho_A)} (I_{(\mu_A,\rho_A),(\mu_A,\rho_A)})^{-1} I_{(\mu_A,\rho_A),m_B}\bigr)^{-1},
\end{multline}
with $I_{(\mu_A,\rho_A),(\mu_A,\rho_A)}$ invertible and $I(\gamma)$ partitioned as
\begin{equation*}
    I(\gamma) = \begin{bmatrix}
        I_{m_B, m_B} & I_{m_B, (\mu_A, \rho_A)} \\
        I_{(\mu_A, \rho_A), m_B} & I_{(\mu_A, \rho_A), (\mu_A, \rho_A)}
    \end{bmatrix}.
\end{equation*}
Moreover, we observe that
\begin{multline}\label{eq:variational}
I_{m_B,m_B}
-
I_{m_B,(\mu_A,\rho_A)}
(I_{(\mu_A,\rho_A),(\mu_A,\rho_A)})^{-1}
I_{(\mu_A,\rho_A),m_B}\\
=
\min_{\bm z\in\mathbb R^2}
\big\{
I_{m_B,m_B}
+
2\bm z^\top I_{(\mu_A,\rho_A),m_B}
\\+
\bm z^\top I_{(\mu_A,\rho_A),(\mu_A,\rho_A)}\bm z
\big\}.
\end{multline}
This representation introduces an auxiliary variable \(\bm z\) and avoids explicit products of Fisher-information entries, which is useful when constructing a quadratic AF objective.

\subsection{Proxy Alignment Faking Objective}

The map $v \mapsto [I(\gamma)^{-1}]_{m_B,m_B}$ is the natural objective for player $A$'s information acquisition, since it is the asymptotic variance of the MLE of $m_B$ (Remark~\ref{rem:AF_obj}).  However, $[I(\gamma)^{-1}]_{m_B,m_B}$ depends on parameters $(m_B,\mu_A, \rho_A)$, which are unknown to player $A$. We therefore construct a proxy objective using only the quantities known to player \(A\), namely \((m_A,\mu_B,\rho_B)\), yielding the max-min problem~\eqref{eq:saddle}.

To motivate the proxy construction of $[I(\gamma)^{-1}]$, suppose the truncation interval \([m_A^-,m_A^+]\) is wide relative to \(\rho_A\), so that \(\pi^A\) is close to the Gaussian \(\mathcal N(\mu_A,\rho_A^2)\). Writing \(M_A \approx \mu_A+\rho_A Z\) with \(Z\sim\mathcal N(0,1)\), the sensitivities in \eqref{eq:thetaBunderprior} may then be approximated by
 \begin{align*}
    &\E^{Z}\left[\partial_{m_A}\theta^B(t; \mu_A + \rho_A Z, m_B)\right], \\
    &\E^{Z}\left[Z\partial_{m_A}\theta^B(t; \mu_A + \rho_A Z, m_B)\right], \\
    &\E^{Z}\left[\partial_{m_B}\theta^B(t; \mu_A + \rho_A Z, m_B)\right].
 \end{align*} 
Since player $A$ does not know $\mu_A$ or $\rho_A$, none of these expected sensitivities can be computed by player $A$.  We therefore construct implementable \emph{proxy} sensitivities using only quantities available to player $A$. For the first argument of $\theta^B$, player $A$ replaces player $B$'s unknown belief about $m_A$ by the realized value, which is known exactly to player $A$. For the second argument, the unknown realization $m_B$ is represented using player $A$'s own prior $M_B\sim\pi_B$, approximated here by $\mu_B+\rho_B Z$. Retaining the functional form of the true expected sensitivities with these substitutions, we define:
\begin{align*}
    \partial_{\mu_A} \overline{\widetilde{\theta}}^B(t;m_A) &:= \E^{Z}\left[\partial_{m_A}\theta^B(t; m_A, \mu_B + \rho_B Z)\right], \\
    \partial_{\rho_A} \overline{\widetilde{\theta}}^B(t; m_A) &:= \E^{Z}\left[Z\partial_{m_A}\theta^B(t; m_A, \mu_B + \rho_B Z)\right], \\
    \partial_{m_B} \overline{\widetilde{\theta}}^B(t; m_A) &:= \E^{\pi^B}[\partial_{m_B}\theta^B(t; m_A, M_B)],
\end{align*}
where the last definition is for notational consistency.  

Player $A$ then uses $\partial_{\gamma} \overline{\widetilde{\theta}}^B$ as a \emph{proxy} for $\partial_{\gamma} \widetilde{\theta}^B$. This yields the proxy Fisher information matrix $\overline{I}$, whose $jk$-th entry is
\begin{multline*}
    \tfrac{1}{\sigma_B^2 r_B^2}\, \E\!\Big[\int_0^T \!\big(X_t^A \partial_{\gamma_j} \overline{\widetilde{\theta}}_{12}^B + X_t^B \partial_{\gamma_j} \overline{\widetilde{\theta}}_{22}^B\big)\\
    \cdot \big(X_t^A \partial_{\gamma_k} \overline{\widetilde{\theta}}_{12}^B + X_t^B \partial_{\gamma_k} \overline{\widetilde{\theta}}_{22}^B\big) dt\Big]. 
\end{multline*}
As with $I(\gamma)$, we partition $\overline{I}$ as 
\begin{equation*}
    \overline{I} = \begin{bmatrix}
        \overline{I}_{m_B, m_B} & \overline{I}_{m_B, (\mu_A, \rho_A)} \\
        \overline{I}_{(\mu_A, \rho_A), m_B} & \overline{I}_{(\mu_A, \rho_A), (\mu_A, \rho_A)}
    \end{bmatrix}.
\end{equation*}

Using the proxy $\overline{I}$ and the variational identity \eqref{eq:variational}, we define the proxy AF cost functional $J^{AF}$ as 
\begin{multline*}
    J^{AF}[v, \bm{z}] := \E\!\Big[\int_0^T  q^{AF}(v_t - \overline{u}^{A,*}_t)^2 +  r^{AF} v_t^2 \, dt\Big] \\
    -  \lambda^{AF} \big(\overline{I}_{m_B,m_B} + 2\bm{z}\transpose \overline{I}_{(\mu_A,\rho_A),m_B} + \bm{z}\transpose \overline{I}_{(\mu_A,\rho_A),(\mu_A,\rho_A)} \bm{z}\big),
\end{multline*}
where $ q^{AF},  r^{AF} > 0$ penalize deviation from the implementable baseline $\overline{u}^{A,*}_t$ and control effort, respectively, while $ \lambda^{AF} > 0$ rewards information acquisition.  
We then consider the saddle-point problem
\begin{equation}\label{eq:saddle}
    \max_{\bm{z} \in \R^2}\; \min_{v}\; J^{AF}[v, \bm{z}].
\end{equation}
Importantly, player $A$ optimizes $J^{AF}$ under dynamics 
\begin{equation}\label{eq:dynamics_AF_playerA}
    d\bm{X}_t = \begin{bmatrix} v_t \\ \overline{u}^{B,*}_t\end{bmatrix} dt + \Sigma\, d\bm{W}_t,
\end{equation}
where $\widetilde u^{B, \ast}$ in \eqref{eq:dynamics_AF} is replaced by $ \overline{u}^{B, \ast}_t := \overline u^{B, \ast} (t,\bm X_t; m_A)$ with $\overline u^{B, \ast} (t,\bm x; m_A) = \E^{\pi^B}\big[u^{B,\ast}(t, \bm x; m_A, M_B)\big]$
since player $A$ is unable to compute $B$'s true control $\widetilde{u}^{B,\ast}$. 



\subsection{Solution via Quadratic Ansatz and Gradient Descent}

We now solve the saddle-point problem \eqref{eq:saddle}--\eqref{eq:dynamics_AF_playerA}. For $\bm{z}= [ z_1, z_2 ]\transpose$, define the terms 
\begin{align*}
    g_{12}(t,\bm{z}) &:= \tfrac{1}{r_B}\Big(z_1 \partial_{\mu_A} \overline{\widetilde{\theta}}_{12}^B + z_2 \partial_{\rho_A} \overline{\widetilde{\theta}}_{12}^B + \partial_{m_B} \overline{\widetilde{\theta}}_{12}^B\Big),\\
    g_{22}(t,\bm{z}) &:= \tfrac{1}{r_B}\Big(z_1 \partial_{\mu_A} \overline{\widetilde{\theta}}_{22}^B + z_2 \partial_{\rho_A} \overline{\widetilde{\theta}}_{22}^B + \partial_{m_B} \overline{\widetilde{\theta}}_{22}^B\Big).
\end{align*}

For fixed $\bm{z} \in \R^2$, let $V^{AF}(t,\bm x;\bm z)$ be the value function associated with $J^{AF}$ starting at $\bm X_t = \bm x$. Using the HJB approach and a quadratic ansatz 
\[
V^{AF}(t,\bm x;\bm z)
=
\bm x\transpose \theta^{AF}(t; \bm z)\bm x
+
\psi^{AF}(t; \bm z),\]
with $\theta^{AF}(t)\in\mathbb S^{2\times 2}$, the minimizer over $v$ is given by
\begin{multline}\label{eq:v_ast_z}
    v^\ast(t, \bm x; \bm{z}) = -\tfrac{1}{ q^{AF} +  r^{AF}}\Big( \tfrac{ q^{AF}}{r_A}\big(x^A \overline{\theta}_{11}^A(t) + x^B \overline{\theta}_{12}^A(t)\big)\\
    + x^A \theta_{11}^{AF}(t; \bm z) + x^B \theta_{12}^{AF}(t; \bm z)\Big),
\end{multline}
where $\theta^{AF}(t; \bm z)$ solves the Riccati ODE 
\begin{multline}\label{eq:theta_AF_ODE}
    \dot{\theta}^{AF} = \tfrac{1}{ q^{AF}+ r^{AF}}\theta^{AF} R_A \theta^{AF}\\ 
    \quad+ \tfrac{ q^{AF}}{( q^{AF}+ r^{AF})r_A}\big(\theta^{AF} R_A \overline{\theta}^A + \overline{\theta}^A R_A \theta^{AF}\big) \\
    \quad+ \theta^{AF} R_B \overline{\theta}^B + \overline{\theta}^B R_B \theta^{AF} - \tfrac{ q^{AF}  r^{AF}}{( q^{AF}+  r^{AF})r_A^2}\overline{\theta}^A R_A \overline{\theta}^A \\
    \quad - \tfrac{ \lambda^{AF}}{\sigma_B^2} \begin{bmatrix} g_{12}(t,\bm{z}) \\ g_{22}(t,\bm{z}) \end{bmatrix}\begin{bmatrix}  g_{12}(t,\bm{z}) \\ g_{22}(t,\bm{z}) \end{bmatrix}\transpose,
\end{multline}
with terminal condition $\theta^{AF}(T) = 0$.  

Computing the saddle-point solution of \eqref{eq:saddle} requires solving \eqref{eq:theta_AF_ODE}  for each candidate $\bm{z}$, simulating the corresponding controlled state paths, and optimizing over $\bm{z}$.  
We therefore propose an iterative scheme in Algorithm~\ref{alg:opt}, which alternates between exact minimization in $v$ through the Riccati ODEs and one gradient-ascent step in $\bm{z}$ using Monte Carlo estimates and automatic differentiation.  To ensure existence of $\theta^{AF}$ (cf. Proposition~\ref{prop:exist_unique_thetaAF}), each gradient step of $\bm{z}$ is projected onto $B_r:=\{\bm{z}:\|\bm{z}\|\leq r\}$ via the Euclidean projection $\Pi_{B_r}$, for some $r>0$.


\begin{algorithm}[htp]
    \caption{Optimization of $J^{AF}$}\label{alg:opt}
    \KwIn{Initial value $\bm{z}^{(0)} \in \R^2$, projection radius $r>0$, convergence tolerance $\epsilon > 0$, learning rate $\alpha > 0$, sample size $N_{\text{MC}} \in \mathbb{N}$.}
    Initialize $k \gets 0$, $J^{AF}_\mathrm{prev} \gets -\infty$, $J^{AF}_\mathrm{curr} \gets \infty$\;
    \While{$\big|J^{AF}_\mathrm{curr} - J^{AF}_\mathrm{prev} \big| > \epsilon$}{
        Set $J^{AF}_\mathrm{prev} \gets J^{AF}_\mathrm{curr}$\;
        Compute $\theta^{AF}(t; \bm{z}^{(k)})$\;
        Compute $v^\ast(t, \bm x; \bm{z}^{(k)})$ according to \eqref{eq:v_ast_z}\;
        Sample $N_{\text{MC}}$ paths of $X^A$ under $v^{\ast}(t, \bm X_t; \bm{z}^{(k)})$ and $X^B$ under $\overline{u}^{B,\ast}(t, \bm X_t; m_A)$\;
        Compute $J^{AF}_\mathrm{curr} \approx J^{AF}[v^\ast(\cdot; \bm{z}^{(k)}), \bm{z}^{(k)}]$ with Monte Carlo\;
        Compute $\nabla_{\bm{z}} J^{AF}_\mathrm{curr} \approx \nabla_{\bm{z}} J^{AF}[v^\ast(\cdot; \bm{z}^{(k)}), \bm{z}^{(k)}]$ with automatic differentiation\;
        Set $\bm{z}^{(k+1)} \gets \Pi_{B_r}\big(\bm{z}^{(k)} + \alpha \nabla_{\bm{z}} J^{AF}_\mathrm{curr}\big)$\;
    }
    \Return{$\big(\bm{z}^{\ast}, v^{\ast}(t, \bm x; \bm{z}^{\ast})\big)$}.
\end{algorithm}

\begin{proposition}
\label{prop:exist_unique_thetaAF}
    Let $c_A := \max\{|m_A^-|, |m_A^+|\}$, $c_B := \max\{|m_B^-|, |m_B^+|\}$, and assume $\|\bm{z}\| \leq r$ for some $r>0$.  Define the constants 
    \begin{align*}
        c_\theta &= \max\Big\{\sup\limits_{[0,\mathcal{T}(c_A,c_B)]}\|\overline{\theta}^A\|_F, \sup\limits_{[0,\mathcal{T}(c_A,c_B)]}\|\overline{\theta}^B\|_F\Big\},\\
        c_g &= \sup\limits_{[0,\mathcal{T}(c_A,c_B)], \|\bm{z}\| \leq r}\|[ g_{12}, g_{22}]\transpose \|_F.  
    \end{align*}
    If $T < \min\big\{\mathcal{T}(c_A, c_B), \pi\big(2\sqrt{(d_1 + \tfrac{1}{2})\big(\tfrac{d_2^2}{2} + d_3\big)}\big)^{-1}\big\}$,
    where $d_1$, $d_2$, and $d_3$ are defined as 
        $d_1 := \tfrac{1}{( q^{AF}+ r^{AF})r_A}$,  $d_2 := \tfrac{2  q^{AF} c_\theta}{( q^{AF} +  r^{AF})r_A^2} + \tfrac{2 c_\theta}{r_B}$, and  $d_3 := \tfrac{ q^{AF}  r^{AF} c_\theta^2}{( q^{AF} +  r^{AF})r_A^3} + \tfrac{ \lambda^{AF} c_g^2}{\sigma_B^2}$,
    then equation \eqref{eq:theta_AF_ODE} admits a unique solution on \([0,T]\).
\end{proposition}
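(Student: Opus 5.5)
We follow the same comparison-plus-continuation argument used for Propositions~\ref{prop:theta_bound} and~\ref{prop:exist_unique_baseline}. The first task is to make sure every coefficient of \eqref{eq:theta_AF_ODE} is well defined and bounded on $[0,T]$.

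Since $T<\mathcal{T}(c_A,c_B)$ and every admissible pair $(m_A,M_B)$ has $|m_A|\le c_A$, $|M_B|\le c_B$, we need $\mathcal{T}(m_A,m_B)\ge\mathcal{T}(c_A,c_B)$. We will check this by noting that $\mathcal{T}$ decreases in $1+m_A^2+m_B^2$ (the prefactor $(c_1c_2)^{-1/2}$ decreases; if monotonicity of the arctan factor fails we simply use the uniform bound over the compact parameter box). Granting this, Propositions~\ref{prop:theta_bound}--\ref{prop:exist_unique_baseline} give unique, uniformly bounded solutions $\theta^i(t;m_A,M_B)$. The sensitivity ODEs for $\partial_{m_A}\theta^B$ and $\partial_{m_B}\theta^B$ are linear with bounded coefficients, so these derivatives exist and are bounded uniformly. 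Dominated convergence then shows that $\overline\theta^A$, $\overline\theta^B$ and the proxy sensitivities (hence $g_{12},g_{22}$) are continuous on $[0,T]$, so $c_\theta$ and $c_g$ are finite. The right-hand side $G(t,\theta^{AF})$ of \eqref{eq:theta_AF_ODE} is therefore continuous in $t$ and quadratic, hence locally Lipschitz, in $\theta^{AF}$. Picard--Lindel\"of gives a unique local solution of the time-reversed problem $Y(s)=\theta^{AF}(T-s)$, $Y(0)=0$, on a maximal interval $[0,T^*)$.

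Next we need an a priori bound. Using submultiplicativity, $\|R_A\|_F=1/r_A$, $\|R_B\|_F=1/r_B$, and $\|gg^\top\|_F\le c_g^2$, we obtain
\[
\|\dot Y\|_F\le a\|Y\|_F^2+b\|Y\|_F+c ,
\]
with $a$, $b$, $c$ matching $d_1$, $d_2$, $d_3$ up to the precise normalization of $R_A$ (we will track the factors of $1/r_A$ so that the constants agree with the statement). We then remove the linear term with Young's inequality, $b y\le \tfrac12 y^2+\tfrac{b^2}{2}$, which yields
\[
\|\dot Y\|_F\le (d_1+\tfrac12)\|Y\|_F^2+\big(\tfrac{d_2^2}{2}+d_3\big).
\]
This explains the form of the horizon bound.

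The comparison step mirrors the proof of Proposition~\ref{prop:theta_bound}. The scalar ODE $\dot u=(d_1+\tfrac12)u^2+(\tfrac{d_2^2}{2}+d_3)$, $u(0)=0$, has solution $u(s)=\sqrt{\beta/\alpha}\tan(\sqrt{\alpha\beta}\,s)$ with $\alpha=d_1+\tfrac12$ and $\beta=\tfrac{d_2^2}{2}+d_3$. This solution is finite for $s<\pi/(2\sqrt{\alpha\beta})$. By \cite[Corollary~6.3]{hale2009ordinary}, $\|Y(s)\|_F\le u(s)$ on $[0,T^*)$. If $T^*\le T$, then $\|Y\|_F$ would stay bounded by $u(T)<\infty$, contradicting blow-up at $T^*$. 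Hence the solution extends uniquely to $[0,T]$.

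The main obstacle is the coefficient bookkeeping in the norm estimate. Matching exactly the stated $d_1$, $d_2$, $d_3$, including the $r_A^2$ and $r_A^3$ powers, depends on how $R_A$ combines with the prefactors. A secondary point is the uniformity of the bounds on $\overline\theta$ and $g$ over the parameter box, which we handle through the monotonicity of $\mathcal{T}$ or compactness, as described above.
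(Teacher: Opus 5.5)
Your proposal is correct and is essentially the paper's proof: time reversal $Y(t)=\theta^{AF}(T-t)$, the bound $\|\dot Y\|_F\le d_1\|Y\|_F^2+d_2\|Y\|_F+d_3$, Young's inequality to absorb the linear term, comparison with $\dot u=(d_1+\tfrac12)u^2+\tfrac{d_2^2}{2}+d_3$, and Picard--Lindel\"of with no blow-up (the paper simply takes $c_\theta$, $c_g$ as given rather than re-deriving finiteness of the coefficients). The bookkeeping you flag is immediate, since $\|R_A\|_F=1/r_A$, $\|R_B\|_F=1/r_B$ and a factor $2$ from each symmetric pair of cross terms give exactly the stated $d_1,d_2,d_3$; your monotonicity hedge is also unnecessary, because $\mathcal{T}(m_A,m_B)=\arctan(w)/(\sqrt{q_A^2+q_B^2}\,w)$ with $w=\sqrt{c_1(1+m_A^2+m_B^2)}\,(q_A^2+q_B^2)^{-1/4}$, which is decreasing in $1+m_A^2+m_B^2$.
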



\begin{proof}
    We argue as in Propositions~\ref{prop:theta_bound} and \ref{prop:exist_unique_baseline}. 
    First, we bound $\theta^{AF}$ when a solution exists.  
    Let $Y(t)= \theta^{AF}(T-t)$. 
    From \eqref{eq:theta_AF_ODE}, applying the triangle inequality and submultiplicativity of $\|\cdot\|_F$ produces $\|\dot{Y}\|_F \leq d_1\|Y\|_F^2 + d_2\|Y\|_F + d_3$.
    By Young's inequality for products, $d_2\|Y\|_F \leq \tfrac{1}{2}\|Y\|_F^2 + \tfrac{d_2^2}{2}$,
    so that $\|\dot{Y}\|_F \leq \big(d_1 + \tfrac{1}{2}\big)\|Y\|_F^2 + \tfrac{d_2^2}{2} + d_3. $
    
    Set $\hat{d_1} := d_1 + \tfrac{1}{2}$, $\hat{d_3} := \tfrac{d_2^2}{2} + d_3$ and consider the scalar comparison ODE $
        \dot{u} = \hat{d_1} u^2 + \hat{d_3}, \; u(0) = 0.$   As in Proposition~\ref{prop:theta_bound}, if $T < \tfrac{\pi}{2\sqrt{\hat{d_1}\hat{d_3}}}$ then $u(t)$ remains bounded on $[0,T]$.
    By \cite[Corollary~6.3]{hale2009ordinary}, $\|Y(t)\|_F \leq u(t)$, so any local solution of \eqref{eq:theta_AF_ODE} is bounded on \([0,T]\).
    
    Lastly, as in Proposition~\ref{prop:exist_unique_baseline}, 
since the right-hand side of \eqref{eq:theta_AF_ODE} is locally Lipschitz in \(\theta^{AF}\), the Picard--Lindel\"of theorem gives a unique local solution near \(T\). The boundedness established above rules out finite-time blow-up, so the local solution extends uniquely to all of \([0,T]\).   
\end{proof}

\subsection{Detection of Alignment Faking}\label{sec:detection}
We now consider a heuristic detection procedure from player \(B\)'s perspective. Using the prior \(\pi^A\), player \(B\) forms its own implementable prediction of player \(A\)'s baseline control:
\begin{equation*}
    \widetilde{u}^{A, \ast}_t := \E^{\pi^A}\big[ u^{A,\ast}(t, \bm{X}_t; M_A, m_B)\big].
\end{equation*}
Player $B$ then considers the residual process
\[d\nu_t := dX_t^A - \widetilde{u}^{A,\ast}_t\,dt, \]
which measures the discrepancy between the observed state increment and the increment predicted by player \(B\)'s model of player \(A\)'s baseline behavior.

Under the non-AF dynamics \eqref{eq:baseline_dynamics}, $d\nu_t = \big(\overline{u}_t^{A,\ast} - \widetilde{u}^{A,\ast}_t\big) dt + \sigma_AdW_t^A$.
Since both \(\overline u^{A,*}\) and \(\widetilde u^{A,*}\) are linear in states, the drift of $\nu_t$ is linear in the state:
\begin{align*}
    [\overline{u}^{A,\ast} - \widetilde{u}^{A,\ast}](t, \bm x) &= -\tfrac{1}{r_A}\Big(x^A\big(\overline{\theta}_{11}^A - \widetilde{\theta}_{11}^A\big) + x^B\big(\overline{\theta}_{12}^A - \widetilde{\theta}_{12}^A\big)\Big) \notag \\
    &=: \delta_{11}^A(t) x^A + \delta_{12}^A(t)x^B,
\end{align*}
where  $\delta_{11}^A$ and $\delta_{12}^A$ are deterministic functions depending on the discrepancy between the two players' priors.

If player \(A\) instead uses the AF control \(v^*(t, \bm x; \bm z^*)\) from \eqref{eq:saddle}, then $d\nu_t = \big(v^{\ast}(t, \bm X_t; \bm{z}^\ast) - \widetilde{u}_t^{A,\ast}\big) dt + \sigma_A dW_t^A$,
and the drift becomes
\begin{align*}
    v^\ast(t, \bm x; &\bm{z}^\ast) - \widetilde{u}^{A,\ast}(t, \bm x; m_B) \\
    &= -\tfrac{1}{ q^{AF}+ r^{AF}}\Big(\tfrac{ q^{AF}}{r_A}\big(x^A \overline{\theta}_{11}^A + x^B \overline{\theta}_{12}^A\big) \notag\\
    &\qquad + x^A \theta_{11}^{AF} + x^B \theta_{12}^{AF}\Big) + \tfrac{1}{r_A}\big(x^A \widetilde{\theta}_{11}^A + x^B \widetilde{\theta}_{12}^A\big) \notag \\
    &=: \eta_{1}^{AF}(t; \bm z^\ast) x^A + \eta_{2}^{AF}(t; \bm z^\ast) x^B, \quad\text{where} \\
    \eta_1^{AF}&(t; \bm z^\ast) := \tfrac{1}{r_A}\widetilde{\theta}_{11}^A - \tfrac{1}{ q^{AF}+ r^{AF}}\Big(\tfrac{ q^{AF}}{r_A}\overline{\theta}_{11}^A + \theta_{11}^{AF}\Big), \\
    \eta_2^{AF}&(t; \bm z^\ast) := \tfrac{1}{r_A}\widetilde{\theta}_{12}^A - \tfrac{1}{ q^{AF}+ r^{AF}}\Big(\tfrac{ q^{AF}}{r_A}\overline{\theta}_{12}^A + \theta_{12}^{AF}\Big).
\end{align*}
Thus, under both non-AF and AF objectives, the drift of $\nu_t$ is linear in the state, but with different coefficients.

For numerical detection, we discretize the residual as
\[
    \Delta\nu_{t_k}
    :=
    X_{t_{k+1}}^A-X_{t_k}^A-\widetilde u^{A,*}(t_k,\bm X_{t_k}; m_B)\Delta t.
\]
If the game were played repeatedly, we could then perform, for each time step \(t_k\), a least-squares regression of \(\Delta\nu_{t_k}\) on \(X_{t_k}^A\) and \(X_{t_k}^B\): $
    \Delta\nu_{t_k}
    \approx
    \alpha_1(t_k)X_{t_k}^A+\alpha_2(t_k)X_{t_k}^B+\epsilon_k$,
where \(\epsilon_k\sim\mathcal N(0,\sigma_A^2\Delta t)\).

When the priors \(\pi^A\) and \(\pi^B\) are reasonably accurate, our numerical experiments in Section~\ref{subsec:traj_info_detec} indicate that the magnitudes of \(\alpha_1\) and \(\alpha_2\) tend to be larger under AF than under baseline play. This motivates the detection statistic
\begin{equation}\label{eq:detection_mag}
    D^{AF} := \max\big\{ \max\limits_{t_k} \big|\alpha_1(t_k)\big|, \max\limits_{t_k} \big|\alpha_2(t_k)\big| \big\}.
\end{equation}
Heuristically, larger values of \(D^{AF}\) indicate a higher likelihood that player \(A\) is using the AF control rather than the baseline implementable control.

Since both hypotheses render the drift of $\nu_t$ linear in the state with known Gaussian noise, a likelihood-ratio test between the two drift specifications is a natural formalization, which we leave to future work.  In our numerical experiments (see Figure~\ref{fig:traj}), \(D^{AF}\) exhibits a consistent separation between AF and baseline controls across the tested parameter regimes. This detection procedure requires player \(B\) to observe multiple independent realizations of the game, which is in line with the discussion in Remark~\ref{rem:AF_obj}. As expected, when the priors \(\pi^A\) and \(\pi^B\) are substantially misspecified, or when \(q^{AF}/\lambda^{AF}\) is large, the separation between AF and baseline detection scores decreases. This reflects the tradeoff between information gain and detectability. 



\section{Numerical Illustrations}\label{sec:numerical_results}

In this section, we present numerical experiments illustrating the main findings of the paper. We compare the asymptotic variance of \(m_B\), given by \([I(\gamma)^{-1}]_{m_B,m_B}\), with its proxy counterpart \([\overline{I}^{-1}]_{m_B,m_B}\) under both AF and baseline controls. Our experiments focus on three aspects: the relationship between the accuracy of the priors, information gain, and detectability; the effect of mean misspecification in \(\pi^A\) and \(\pi^B\) on information quality; and the role of the AF intensity \( \lambda^{AF}\) when the prior means are correctly specified. Unless otherwise stated, all experiments use the following parameter configuration:
\begin{gather*}
    T = 1.0,\quad m_A = m_B = 1.0, \quad \mu_A = \mu_B = 1.0, \\
    q_A = r_A = q_B = r_B = r^{AF} = 1.0, \quad \sigma_A = \sigma_B = 0.1. 
\end{gather*}
Controlled SDEs are simulated by the Euler--Maruyama scheme on a uniform grid of \(N=100\) time steps over \([0,T]\).  All expectations are estimated via Monte Carlo with $N_{\text{MC}}=50{,}000$ independent samples.  For all experiments, Algorithm~\ref{alg:opt} uses $\bm{z}^{(0)} = \bm{0}, \; \epsilon = 10^{-4}, \; \alpha = 0.05.$ We observe stable numerical convergence across all experiments, with termination occurring within 70 iterations in every case. In addition, motivated by Proposition~\ref{prop:exist_unique_thetaAF}, we monitor \(\bm z\) throughout the iterations and observe that \(\|\bm z\|<2\) in all experiments, so with $r=2$ in Algorithm~\ref{alg:opt} the projection $\Pi_{B_r}$ never activates.




\subsection{Trajectories, Information, and Detectability}\label{subsec:traj_info_detec}

Figure~\ref{fig:traj} provides a combined view of player $A$'s AF control problem and information acquisition.  We examine the case that the mean of $\pi^A$ is correctly specified, while the mean of $\pi^B$ ranges from correct to misspecified.

\begin{figure}[htbp]
    \centering
    \includegraphics[width=1.0\linewidth]{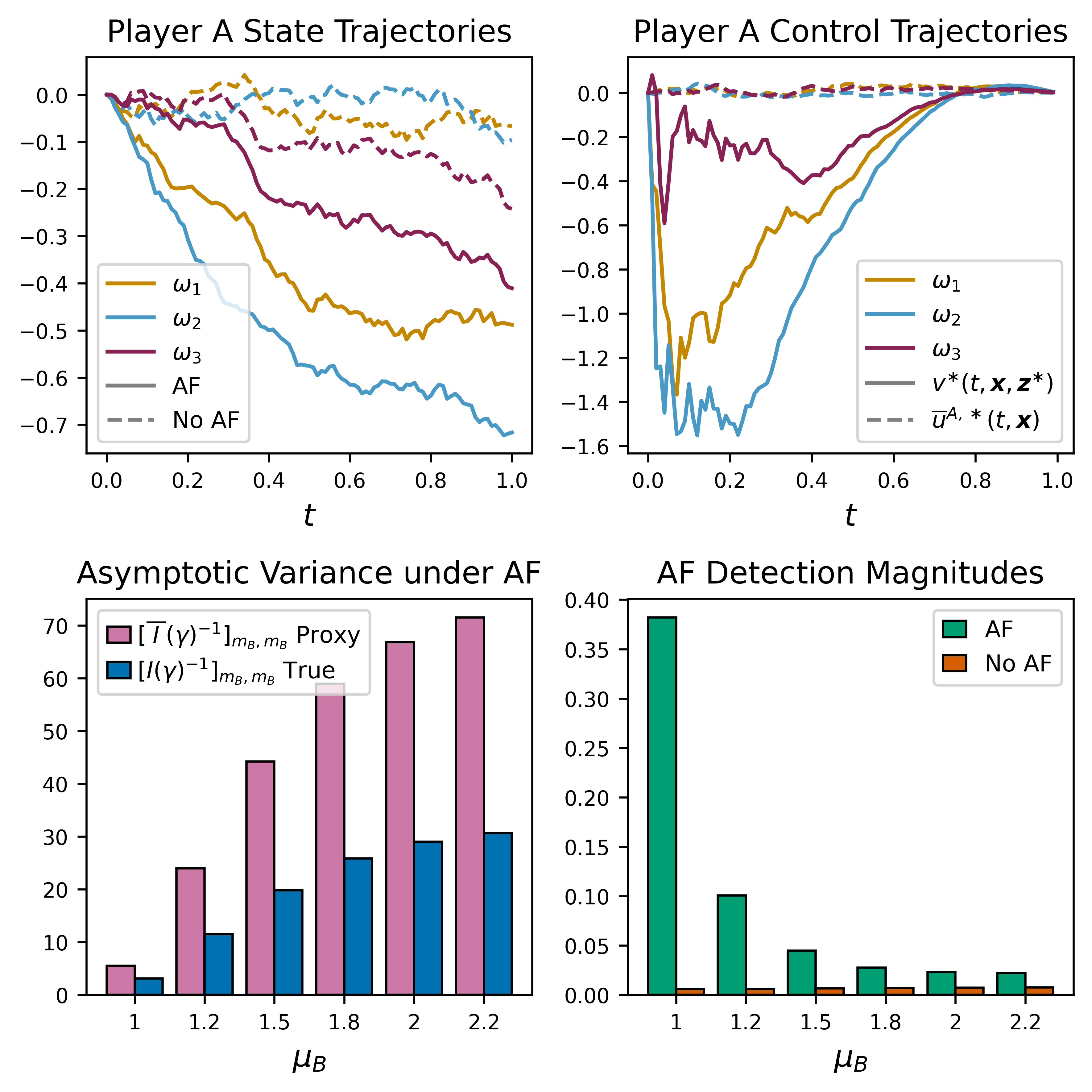}
    \caption{
    Combined view of AF behavior with fixed $\mu_A = m_A= 1.0$ and varying $\mu_B \in \{1.0, 1.2, 1.5, 1.8, 2.0, 2.2\}$.
    Top panels show state (left) and control (right) trajectories in the case that $\mu_B = m_B = 1.0$.
    Bottom panels show asymptotic variance (left) and regression-based  detectability $D^{AF}$ (right), both computed via Monte Carlo.
    Parameters: $ q^{AF} = 5.0$, $ \lambda^{AF} = 2.5$, and $\rho_A = \rho_B = 0.1$.
    }
    \label{fig:traj}
\end{figure}

The top panels show three sampled state and control trajectories of player $A$, corresponding to three independent realizations $\omega_1, \omega_2, \omega_3$,  in the case $\mu_B=1.0$. The AF control $v^\ast$ exhibits noticeably larger inputs than the baseline control $\overline{u}^{A,\ast}$, reflecting the alignment faker's (player $A$'s) active exploration to extract information about $m_B$. 

The bottom-left panel compares the asymptotic variance $[I(\gamma)^{-1}]_{m_B, m_B}$ and its proxy under the AF control as $\mu_B$ varies.  When $\mu_B = m_B$, both the proxy and true variance are small, indicating effective information extraction.  As $\mu_B$ increases, both variances grow, but the proxy variance increases faster than the true one, showing that the proxy systematically overestimates uncertainty under misspecification of \(\pi^B\). Thus, player \(A\)'s self-assessment of information quality becomes increasingly unreliable as its model becomes more inaccurate. The bottom-right panel examines detectability via the regression-based procedure described in Section~\ref{sec:detection}.  For each value of $\mu_B$, we report $D^{AF}$ defined in \eqref{eq:detection_mag} under both the AF and baseline controls.  When $\mu_B$ is close to the true value $m_B$, the AF detection magnitude is substantially larger than the baseline, indicating that the AF behavior is readily detectable.  As $\mu_B$ increases, the AF detection score moves closer to the baseline level, since a more misspecified $\pi^B$ yields AF controls closer to the baseline.  This highlights a tradeoff: an accurate prior $\pi^B$ improves information gain but also increases detectability.

\subsection{Effect of Mean Misspecification on Information} 
We next examine the effect of mean misspecification in both players' priors in Figure~\ref{fig:measure_info}, by varying both $\mu_A$ and $\mu_B$. Under the AF control (solid lines), the true asymptotic variance $[I(\gamma)^{-1}]_{m_B,m_B}$ increases with $\mu_B$, showing that player $A$'s more misspecified model degrades information quality. By contrast, varying $\mu_A$ produces only modest separation among the curves, suggesting that $\pi^B$ is the main driver of the true information gained by the alignment faker, while \(\pi^A\) plays a secondary role. 

Under the baseline control (dashed lines), the true asymptotic variance is nearly constant across all $\mu_A$ and $\mu_B$, and remains substantially higher than the AF variance when $\mu_B$ is close to the true value $m_B$.  This behavior of the baseline curves is explained by the lack of an information-seeking term in the baseline objective; the misspecification of each player's prior only dilutes the effectiveness of their controls, with little effect on the Fisher information.  In contrast, the AF control rewards information acquisition, and the resulting control lowers the true asymptotic variance, producing the gap from the baseline.  We also note that the proxy asymptotic variance, although not shown, increases as $\pi^B$ becomes more misspecified. However, since the proxy $\overline{I}^{-1}$ depends only on $\pi^B$ by design, $\pi^A$ has no effect.  

\begin{figure}[htbp]
    \centering
    \includegraphics[width=0.9\linewidth]{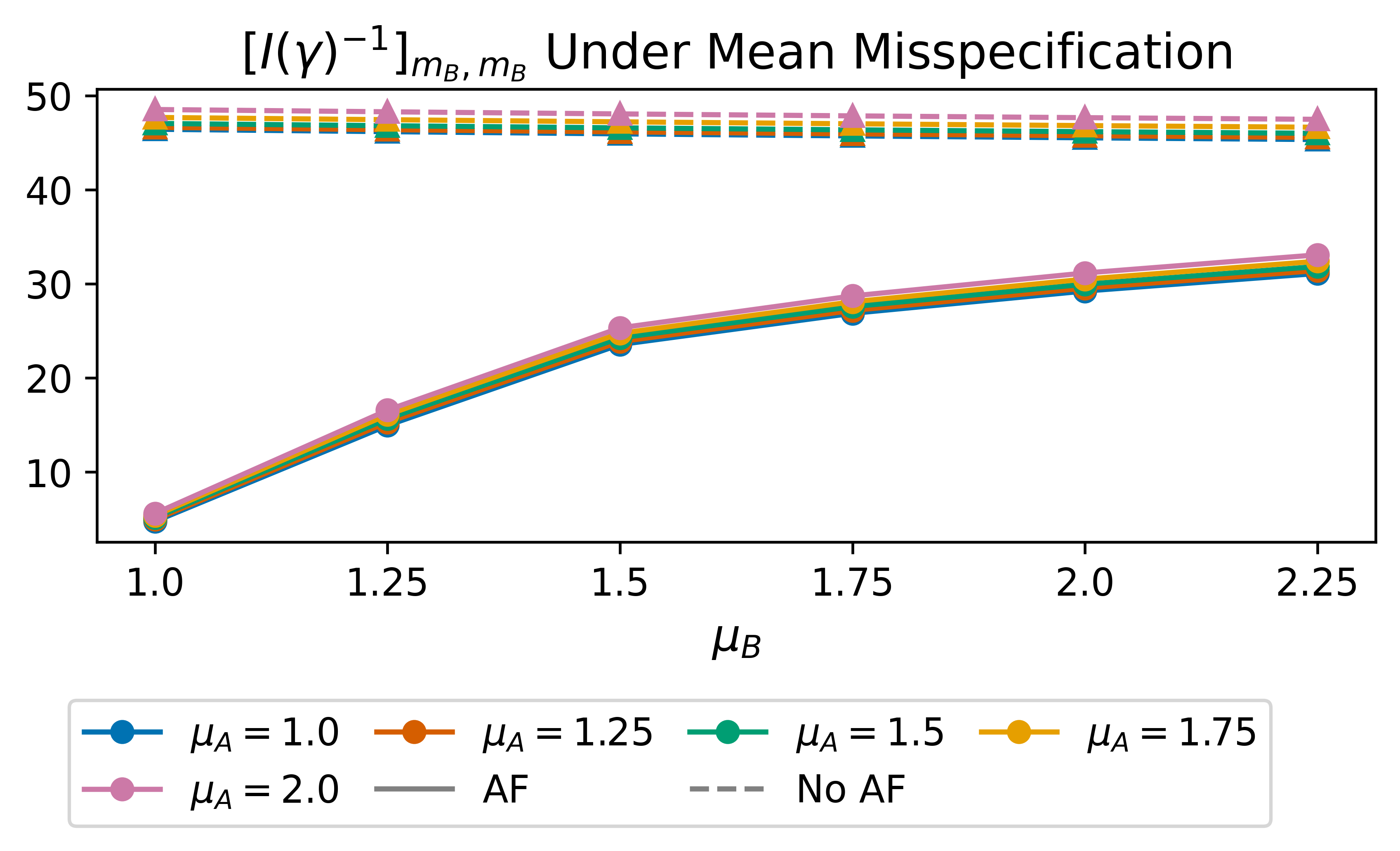}
    \caption{
    True asymptotic variance $[I(\gamma)^{-1}]_{m_B,m_B}$ for $\mu_A \in \{1.0, 1.25, 1.5, 1.75, 2.0\}$ and $\mu_B \in \{1.0, 1.25, 1.5, 1.75, 2.0, 2.25\}$ under both AF (solid) and no AF (dashed) gameplay.
    Parameters: $q^{AF} = 5.0$, $\lambda^{AF} = 2.5$, and $\rho_A = \rho_B = 0.1$.
    }
    \label{fig:measure_info}
\end{figure}

Overall, these results show that a well-specified prior $\pi^B$ allows player $A$ to achieve a substantial information gain relative to the baseline control, while misspecification in \(\pi^A\) has only a limited effect on the true information extracted.


\subsection{Role of $ \lambda^{AF}$ under AF Control}

Finally, Figure~\ref{fig:lam_info} examines how the AF intensity $ \lambda^{AF}$ affects true and proxy asymptotic variance when the means of $\pi^A$ and $\pi^B$ are fixed and the standard deviations vary.  

\begin{figure}[htbp]
    \centering
    \includegraphics[width=0.9\linewidth]{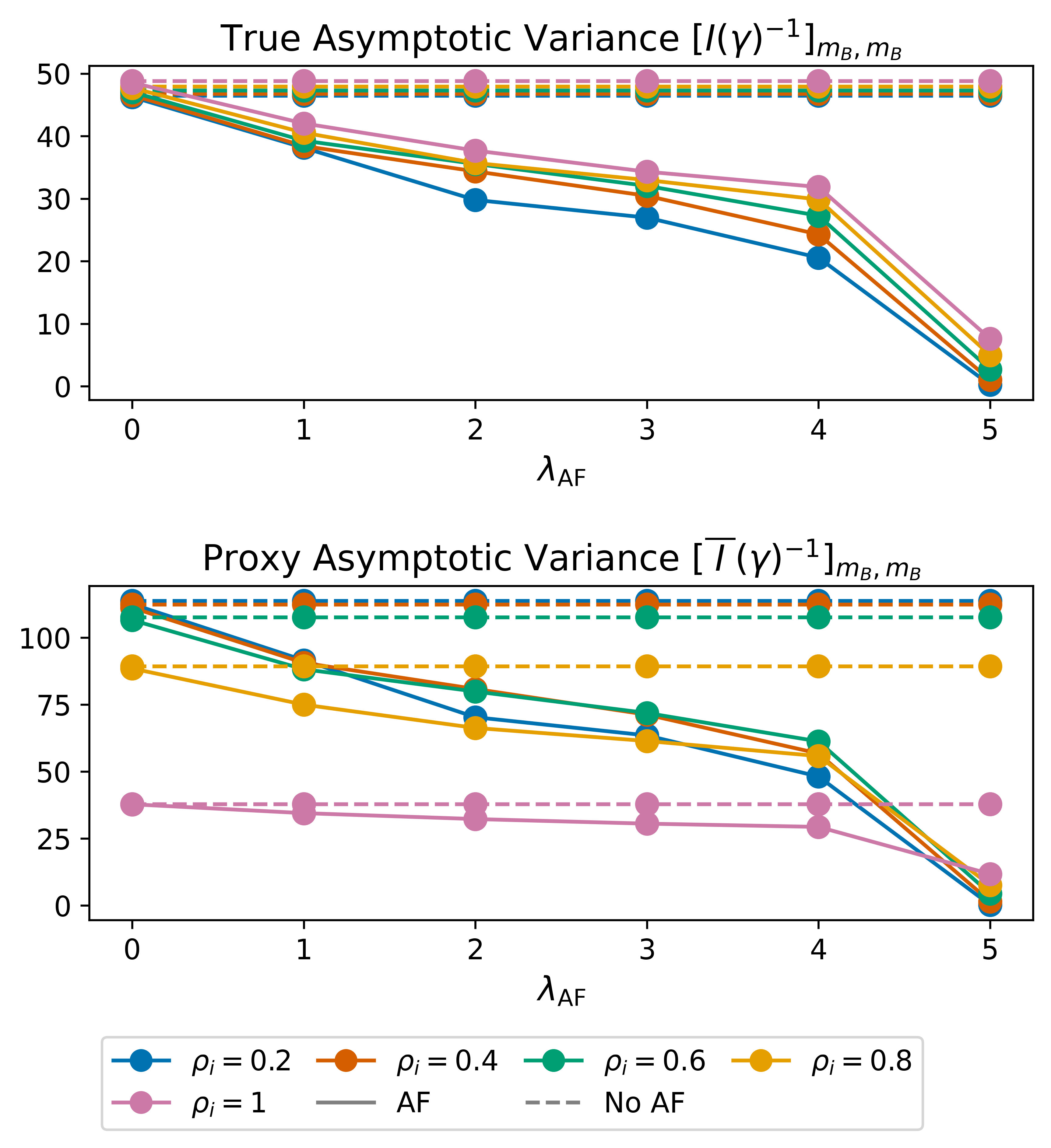}
    \caption{True asymptotic variance \([I(\gamma)^{-1}]_{m_B,m_B}\) (top) and proxy asymptotic variance \([\overline{I}^{-1}]_{m_B,m_B}\) (bottom) versus \(\lambda^{AF}\in \{0,1,2,3,4,5\}\), for standard deviations \(\rho_A=\rho_B\in \{10^{-7},0.1,0.2,\ldots,1.0\}\) and under both AF (solid) and no AF (dashed) gameplay. Each point is computed by Monte Carlo. Parameters: \(q^{AF}=10.0\), \(\mu_A=\mu_B=1.0\).
    }
    \label{fig:lam_info}
\end{figure}

In the top panel, as $ \lambda^{AF}$ increases, the true asymptotic variance $[I(\gamma)^{-1}]_{m_B, m_B}$ under the AF control (solid lines) decreases, with smaller standard deviations $\rho_A = \rho_B$ yielding more pronounced reductions.  The baseline variance (dashed lines) remains approximately constant across all $ \lambda^{AF}$.   The bottom panel shows that the proxy asymptotic variance $[\overline{I}^{-1}]_{m_B,m_B}$ exhibits a similar decreasing trend as $ \lambda^{AF}$ increases.  Interestingly, we observe that the proxy consistently overestimates the true variance at small $\rho_i$ and underestimates it at larger $\rho_i$, suggesting that proxy information can be systematically biased. 

Altogether, this confirms that information gain is driven by the AF control rather than the baseline dynamics, and that 
\(\lambda^{AF}\) serves as an effective parameter for tuning the amount of information extracted.

\section{Conclusions and Future Work}

We studied a two-player stochastic differential game under partial information to investigate information revelation when each player reasons through a proxy model of the opponent. Each player knows its own coupling parameter and models the opponent's through a prior. Starting from the full-information Nash equilibrium, we derived implementable baseline controls by averaging the equilibrium controls under the players' priors and characterized them through coupled Riccati systems. We then formulated the alignment faking problem as a tractable saddle-point control problem that remains close to the implementable baseline while optimizing a proxy Fisher information objective. Numerical results showed that alignment faking can significantly increase information gain, but often at the cost of greater detectability, and that the proxy Fisher information may be systematically misleading under model misspecification. Future work includes extending the framework beyond the linear-quadratic setting, allowing both players to act strategically on information revelation, incorporating dynamic learning and belief updating, and studying richer detection rules and their effect on the tradeoff between information gain and camouflage.



\bibliographystyle{IEEEtran}
\bibliography{references}

\begin{thebibliography}{10}
\providecommand{\url}[1]{#1}
\csname url@samestyle\endcsname
\providecommand{\newblock}{\relax}
\providecommand{\bibinfo}[2]{#2}
\providecommand{\BIBentrySTDinterwordspacing}{\spaceskip=0pt\relax}
\providecommand{\BIBentryALTinterwordstretchfactor}{4}
\providecommand{\BIBentryALTinterwordspacing}{\spaceskip=\fontdimen2\font plus
\BIBentryALTinterwordstretchfactor\fontdimen3\font minus
  \fontdimen4\font\relax}
\providecommand{\BIBforeignlanguage}[2]{{%
\expandafter\ifx\csname l@#1\endcsname\relax
\typeout{** WARNING: IEEEtran.bst: No hyphenation pattern has been}%
\typeout{** loaded for the language `#1'. Using the pattern for}%
\typeout{** the default language instead.}%
\else
\language=\csname l@#1\endcsname
\fi
#2}}
\providecommand{\BIBdecl}{\relax}
\BIBdecl

\bibitem{townsend1983forecasting}
R.~M. Townsend, ``Forecasting the forecasts of others,'' \emph{Journal of
  Political Economy}, vol.~91, no.~4, pp. 546--588, 1983.

\bibitem{greenblatt2024alignment}
R.~Greenblatt, C.~Denison, B.~Wright, F.~Roger, M.~MacDiarmid, S.~Marks,
  J.~Treutlein, T.~Belonax, J.~Chen, D.~Duvenaud \emph{et~al.}, ``Alignment
  faking in large language models,'' \emph{arXiv:2412.14093}.

\bibitem{hubinger2019risks}
E.~Hubinger, C.~Van~Merwijk, V.~Mikulik, J.~Skalse, and S.~Garrabrant, ``Risks
  from learned optimization in advanced machine learning systems,''
  \emph{arXiv:1906.01820}, 2019.

\bibitem{hu2025strategic}
R.~Hu, D.~Ralston, X.~Yang, and H.~Zhou, ``Strategic inference in {S}tackelberg
  games: {O}ptimal control for revealing adversary intent,''
  \emph{arXiv:2510.05641}, 2025.

\bibitem{bacsar1998dynamic}
T.~Ba{\c{s}}ar and G.~J. Olsder, \emph{Dynamic Noncooperative Game
  Theory}.\hskip 1em plus 0.5em minus 0.4em\relax SIAM, 1998.

\bibitem{carmona2016lectures}
R.~Carmona, \emph{Lectures on {BSDE}s, Stochastic Control, and Stochastic
  Differential Games with Financial Applications}.\hskip 1em plus 0.5em minus
  0.4em\relax SIAM, 2016.

\bibitem{zhou2025adversarial}
H.~Zhou, D.~Ralston, X.~Yang, and R.~Hu, ``Adversarial decision-making in
  partially observable multi-agent systems: {A} sequential hypothesis testing
  approach,'' \emph{arXiv:2509.03727}, 2025.

\bibitem{zhou2025integrating}
------, ``Integrating sequential hypothesis testing into adversarial games: {A}
  {S}un {Z}i-inspired framework,'' in \emph{2025 IEEE 64th Conference on
  Decision and Control (CDC)}.\hskip 1em plus 0.5em minus 0.4em\relax IEEE,
  2025, pp. 4540--4546.

\bibitem{kim2026stealthy}
Y.~Kim, H.~Zhou, A.~Benvenuti, R.~Hu, and M.~Hale, ``Deception in
  linear-quadratic control,'' \emph{arXiv:2604.00227}, 2026.

\bibitem{kim2024defining}
Y.~Kim, A.~Benvenuti, B.~Chen, M.~Karabag, A.~Kulkarni, N.~D. Bastian,
  U.~Topcu, and M.~Hale, ``Defining and measuring deception in sequential
  decision systems: {A}pplication to network defense,'' in \emph{2024 IEEE
  Military Communications Conference}, 2024, pp. 1--6.

\bibitem{morris2002social}
S.~Morris and H.~S. Shin, ``Social value of public information,''
  \emph{American Economic Review}, vol.~92, no.~5, pp. 1521--1534, 2002.

\bibitem{zhang2020game}
T.~Zhang, L.~Huang, J.~Pawlick, and Q.~Zhu, ``Game-theoretic analysis of cyber
  deception: {E}vidence-based strategies and dynamic risk mitigation,''
  \emph{Modeling and Design of Secure Internet of Things}, pp. 27--58, 2020.

\bibitem{chang2014linear}
D.~Chang and H.~Xiao, ``Linear quadratic nonzero sum differential games with
  asymmetric information,'' \emph{Mathematical Problems in Engineering}, vol.
  2014, no.~1, p. 262314, 2014.

\bibitem{cardaliaguet2012games}
P.~Cardaliaguet and C.~Rainer, ``Games with incomplete information in
  continuous time and for continuous types,'' \emph{Dynamic Games and
  Applications}, vol.~2, no.~2, pp. 206--227, 2012.

\bibitem{harsanyi1967games}
J.~C. Harsanyi, ``Games with incomplete information played by {B}ayesian
  players, {P}art {I}: {T}he basic model,'' \emph{Management science}, vol.~14,
  no.~3, pp. 159--182, 1967.

\bibitem{molloy2017inverse}
T.~L. Molloy, J.~J. Ford, and T.~Perez, ``Inverse noncooperative dynamic
  games,'' \emph{IFAC-PapersOnLine}, vol.~50, no.~1, pp. 11\,788--11\,793,
  2017.

\bibitem{peters2021inferring}
L.~Peters, D.~Fridovich-Keil, V.~Rubies-Royo, C.~J. Tomlin, and C.~Stachniss,
  ``Inferring objectives in continuous dynamic games from noise-corrupted
  partial state observations,'' \emph{arXiv:2106.03611}, 2021.

\bibitem{ward2025active}
W.~Ward, Y.~Yu, J.~Levy, N.~Mehr, D.~Fridovich-Keil, and U.~Topcu, ``Active
  inverse learning in {S}tackelberg trajectory games,'' in \emph{2025 American
  Control Conference (ACC)}.\hskip 1em plus 0.5em minus 0.4em\relax IEEE, 2025,
  pp. 1547--1553.

\bibitem{abou2012matrix}
H.~Abou-Kandil, G.~Freiling, V.~Ionescu, and G.~Jank, \emph{Matrix {R}iccati
  Equations in Control and Systems Theory}.\hskip 1em plus 0.5em minus
  0.4em\relax Birkh{\"a}user, 2012.

\bibitem{papavassilopoulos1979existence}
G.~Papavassilopoulos, J.~Medanic, and J.~Cruz~Jr, ``On the existence of {N}ash
  strategies and solutions to coupled {R}iccati equations in linear-quadratic
  games,'' \emph{Journal of Optimization Theory and Applications}, vol.~28,
  no.~1, pp. 49--76, 1979.

\bibitem{hale2009ordinary}
J.~K. Hale, \emph{Ordinary Differential Equations}.\hskip 1em plus 0.5em minus
  0.4em\relax Courier Corporation, 2009.

\bibitem{kutoyants2013statistical}
Y.~A. Kutoyants, \emph{Statistical Inference for Ergodic Diffusion
  Processes}.\hskip 1em plus 0.5em minus 0.4em\relax Springer Science \&
  Business Media, 2013.

\bibitem{lehmann1998theory}
E.~L. Lehmann and G.~Casella, \emph{Theory of Point Estimation}.\hskip 1em plus
  0.5em minus 0.4em\relax Springer, 1998.

\end{thebibliography}

\end{document}